\documentclass{article}

\usepackage{amssymb,amsmath,amsthm,amsfonts}

\usepackage{graphicx,epstopdf,subfig}

\usepackage{fullpage}

\theoremstyle{plain}
\newtheorem{theorem}{Theorem}[section]

\newtheorem{proposition}[theorem]{Proposition}

\theoremstyle{definition}

\theoremstyle{remark}
\newtheorem{remark}{Remark}

\begin{document}

\title{Some extensions of the $\varphi$-divergence moment closures for the radiative transfer equation}

\author{M.R.A.~Abdelmalik\footnote{Department of Mechanical Engineering, Eindhoven University of Technology, Groene Loper 3, 5612 AE Eindhoven, Netherland}, 
Z.~Cai\footnote{Department of Mathematics, National University of Singapore, 10 Lower Kent Ridge Road, Singapore 119076.}~~and
T.~Pichard\footnote{CMAP, CNRS, École polytechnique, Institut polytechnique de Paris, 91120, Palaiseau, France}}

\date{}

\maketitle

\begin{abstract}
    The $\varphi$-divergence-based moment method was recently introduced~\cite{PhiRTE1} for the discretization of the radiative transfer equation. At the continuous level, this method is very close to the entropy-based $M_N$ methods and possesses its main properties, i.e. entropy dissipation, rotational invariance and energy conservation. However, the $\varphi$-divergence based moment systems
    are easier to resolve numerically due to the improved conditioning of the discrete equations. Moreover, exact quadrature rules can be used to compute moments of the distribution function, which enables the preservation of energy conservation, entropy dissipation and rotational invariants, discretely. In this paper we consider different variants of the $\varphi-$divergence closures that are based on different approximations of the exponential function and the Planck function. We compare the approximation properties of the proposed closures in the numerical benchmarks.
\end{abstract}

\textit{Keywords:} Radiative transfer equation, Method of moments, $\varphi$-divergence

\section{Introduction}
\label{sec:intro}
This paper is a follow-up to~\cite{PhiRTE1} and is concerned about the discretization of the radiative transfer equation (RTE). This kinetic equation consists in a transport at a velocity $\Omega$ with a constant norm combined with a collision operator. Among the properties that this model satisfies, we want at least to preserve at the discrete level: the conservation of energy, the dissipation of a convex entropy and the rotational invariance.  

The numerical methods commonly used for this model include the Monte-Carlo solvers (DSMC;\cite{Carlson_book,Lewis_Miller_book,pomraning_book,mihalas_book}) and the discrete ordinates method (DOM;\cite{Carlson_book,Lewis_Miller_book,pomraning_book,mihalas_book}), but those are computationnally very expensive, not rotationally invariant and incapable to capture the equilibria distribution the system converges to. Our work falls within the context of the method of moments which is an efficient alternative. The most popular models in this family are the polynomial $P_N$ models (\cite{Chandrasekhar_book,pomraning_book,spectral_meth1,spectral_meth2}) and the entropy-based $M_N$ models (\cite{Minerbo,levermore}) which rely on approximating the $\Omega$-dependencies of the solution respectively by polynomials or by a distribution minimizing the entropy under moment constraints. Those are efficients, but the first fails at modelling beams, the second requires high computational costs due to need to solve large numbers of (potentially ill-conditionned) optimization problems. Those were an inspiration for many other techniques developed in this field, including the simplified models (\cite{Frank_SPN,McClaren_SPN}), the flux-limited diffusion (\cite{Olson_diffusion,Humbird_diffusion}), the interpolative methods (\cite{pichard_m2,Li_B2,groth_m2}) and others (see e.g.~\cite{Schneider_KN,PiN}). 

Recently, the entropy-based method of moment was simplified by exploiting $\varphi$-divergence techniques (\cite{czizar}) while preserving its main properties, originally in the context of rarefied gases (\cite{abdelmalik_thesis,abdelmalik}) and for the RTE in~\cite{PhiRTE1}. This method consists in a Galerkin approximation where the test function space chosen to be polynomials set and  the approximation set is chosen to be a non-linear renormalization applied to the same polynomial set. This renormalization is chosen to be a high degree polynomial approximation of the exponential for the final model to dissipate an approximation of Boltzmann entropy. This model was shown to preserve the three aforementionned properties of conservation of energy, rotational invariance and entropy dissipation. Therefore, it possesses the same properties as $M_N$ models but the optimization problem to solve requires a lower cost and they rely on exact quadrature rules and therefore preserves further in the construction of the discretization the rotational invariance. 

The present work exploits the versatility of the method in order to address two issues: First, when the degree of the renormalization mapping tends to infinity, our method falls back onto the $M_N$ method based on the Boltzmann entropy, it captures therefore exactly the beam distributions but requires to solve worse-conditioned optimization problems. We would like to adapt our method in order to choose the compromise between the acccuracy in the beam regime and the condition of the optimization problems to solve. Second, Boltzmann entrpopy is often considered in radiative transfer by analogy with the rarefied gases models, but other entropies are more physically relevant in this context. We would like to adapt our method such that it converges toward other entropy-based models. These two problems are tackled by constructing other polynomial renormalization mappings, and they eventually preserve the aforementionned properties. 

The paper is organized as follows. The next section recalls the radiative transfer equation and its properties. The next recalls the construction of our $\varphi$-divergence moment closure. The novel construction and study of polynomial renormalization mappings arises in Section~\ref{sec:LCBC}. Section~\ref{sec:numerics} is devoted to numerical experiments with the present method and Section~\ref{sec:concl} to conclusion.

\section{Radiative transfer equation}
\label{sec:RTE}
\begin{subequations}
We aim at solving the radiative transfer equation (RTE)
\begin{equation} 
  \partial_t I + \Omega \cdot \nabla_x I = LI := \sigma\left(\frac{1}{4\pi}\int_{\mathbb{S}^2} I \,\mathrm{d}\Omega - I\right),
  \label{eq:RTE}
\end{equation}
where the unknown $I(t,x,\Omega)$ is the radiative intensity depending on $\Omega\in \mathbb{S}^2$, $x\in \mathbb{R}^3$ and $t\in ]0,T[$. This equation is only supplemented with initial condition
\begin{equation} I(t=0,x,\Omega) = I^0(x,\Omega), \label{eq:RTE_IC}\end{equation}
and we still consider unbounded spatial domain in order to avoid the difficulties emerging with boundary conditions.
\label{sys:RTE}
\end{subequations}

This problem is well-posed and preserves the integrability of the initial condition~(\cite{Dautray-Lions}) in the sense: if $I^0 \in \mathcal{L}^p(\mathbb{R}^d\times\mathbb{S}^2)$, then there exists a unique function $I\in \mathcal{C}\left((0,T);\mathcal{L}^p(\mathbb{R}^d\times\mathbb{S}^2)\right)$ satisfying~\eqref{sys:RTE}. Furthermore, if $I^0 \ge 0$, then $I\ge 0$. In the following, we focus on $\mathcal{L}^1$ solutions.

\begin{subequations}
This equation is known to dissipate any entropy, i.e. for all convex scalar functions $\eta$, then 
\begin{gather} 
  \partial_t H(I) + \nabla_x \cdot G(I) = S(I) \le 0,  \qquad
  H(I) = \int_{\mathbb{S}^2} \eta(I(\Omega))d\Omega, \\
  G(I) = \int_{\mathbb{S}^2} \Omega \eta(I(\Omega))d\Omega, \qquad S(I) = \int_{\mathbb{S}^2} \eta'(I(\Omega)) LI(\Omega)d\Omega.  
\end{gather}
\label{eq:entropy}
\end{subequations}

In the present case, the space of collisional invariants 
\[ \mathtt{C} = \left\{ f \text{ s.t. } \int_{\mathbb{S}^2} f(\Omega)LI(\Omega)d\Omega = 0\right\} \] is one-dimensional and composed only of the isotropic functions 
\[ f\in \mathtt{C} \quad\Leftrightarrow\quad f(\Omega) = \frac{1}{4\pi} \int_{\mathbb{S}^2} f(\Omega)d\Omega. \]
Therefore, for all convex function $\eta$, $H$ is minimum when $I$ is isotropic and the system converges toward such equilibria. 

For the construction of entropy-based moment closure as below, the Boltzmann-Shannon (afterward denoted with the subscript $BS$) entropy
\[ \eta_{BS}(I) = I\log I \]
is often used as a comparison of the kinetic model~\eqref{eq:RTE} with rarefied gas models. But the Bose-Einstein (afterward denoted with the subscript $BE$) entropy 
\[ \eta_{BE}(I) = I\log I - (I+1)\log(I+1) \] 
is more meaningful when considering more physically realistic collision models in~\eqref{sys:RTE}. Typically, interaction of the radiations with matter is often modeled by adding scattering and emission terms in~\eqref{eq:RTE} (see e.g.~\cite{lowrie-morel,pomraning_book,mihalas_book}) which are equivalent, using Stefan's law, to a relaxation term toward a Planck function $(\eta_{BE}')^{-1}$ which parameters depend on matter temperature and radiations frequencies. Therefore, dissipating $\eta_{BE}$ is more relevant in this context than $\eta_{BS}$, and moment closures should be adapted to such other types of entropy. 

Finally, Equation~\eqref{sys:RTE} was shown to be rotationnally invariant, meaning that its solution $I$ satisfies
\[ (\partial_t I)(\mathcal O \Omega) = \partial_t ( I(\mathcal O \Omega)), \quad (\Omega \cdot \nabla_x I)(\mathcal O \Omega) = (\mathcal O \Omega) \cdot \nabla_x (I(\mathcal O \Omega)), \quad (LI)(\mathcal O \Omega) = L (I(\mathcal O \Omega)), \]
for all rotation matrices $\mathcal{O} \in SO(3)$.

\section{$\varphi$-divergence-based moment equations}
\label{sec:phidiv}
We recall here the construction of the moment closure from~\cite{PhiRTE1} and the problems tackled in the present work. 

\subsection{Construction of the Galerkin framework}
The moment system is obtained as a Galerkin approximation of~\eqref{eq:RTE} in the $\Omega$ variable. This formulation requires three elements, the choices and properties are recalled here:
\begin{itemize}
\item A \textit{finite dimensional} test functions space $M$, which must be a \textit{subset of the solution set dual} $\mathcal{L}^\infty(\mathbb{S}^2)$. In order to preserve those properties at the numerical level, $M$ must \textit{contain the collision invariants} $1 \in M$; and $M$ must be \textit{rotational invariant}. The natural choice to satisfy both properties is the set of polynomials up to a certain degree $N$
\[ M := \mathbb{P}_N(\mathbb{S}^2).\]    
\item A renormalization map $\beta$ to account for non-linearity in the approximation. For a convex function $\eta$, choosing $\beta = (\eta')^{-1}$ corresponds to dissipating $\eta$ at the underlying kinetic level (see e.g.~\cite{levermore}). Especially, $\beta$ must be \textit{monotonically increasing} to match such an entropy. Natural choices include 
\begin{equation} \beta_{BS}(g) = \exp(g) = (\eta_{BS}')^{-1}(g), \qquad \beta_{BE}(g) = \frac{1}{\exp(g)-1} = (\eta_{BE}')^{-1}(g). \label{eq:beta_BS_BE}\end{equation}
In our work, we aim at imposing $\beta \in \mathbb{P}_K(\mathbb{R})$ with $K\ge 1$ for numerical quadrature (up to a sufficient order) to be exact, this provided a rotationally invariant algorithm in \cite{PhiRTE1}. We chose renormalization of the form 
\begin{equation} \beta_K(g) = \left(1+ \frac{g}{K}\right)^K = (\eta_K')^{-1}(g) \quad\text{with}\quad \eta_K(I) = K I \left(\frac{K}{K+1}I^{1/K}-1\right),\label{eq:def_betaK}\end{equation}
which are polynomial approximations of $\exp$, and monotonically increasing for odd $K\ge 1$. We exhibit in the next section other choices. 
\item A finite dimensional trial functions space $V$ such that $\beta(V) \subset \mathcal{L}^1$ is a subset of the solution set. Again, $\beta(V)$ must \textit{contain the equilibrium distributions} $\mathtt{C} \subset \beta(V)$; and $V$ must be \textit{rotational invariant}, and we choose again
\[ V := \mathbb{P}_N(\mathbb{S}^2).\]  
\end{itemize}
Eventually, this yields seeking $g\in \mathbb{P}_N(\mathbb{S}^2)$ such that for all $m\in\mathbb{P}_N(\mathbb{S}^2)$
\begin{subequations}
\begin{gather}
    \forall (t,x)\in(0,T)\times\mathbb{R}^d,\quad  \int_{\mathbb{S}^2} m(\Omega) \left[ (\partial_t + \Omega \cdot \nabla_x) \beta(g) - L\beta(g(t,x,\cdot))(\Omega)\right]d\Omega = 0, \label{eq:moment_system} \\
    \forall x\in\mathbb{R}^d,\quad \int_{\mathbb{S}^2} m(\Omega) \left[\beta(g(t=0,x,\Omega)) - I^0(x,\Omega)\right]d\Omega = 0. \label{eq:moment_IC} 
\end{gather}
\label{eq:mom_gal}
\end{subequations}
Let $\boldsymbol{m}$ denote a vector of all of the basis functions $m(\Omega)\in \mathbb P_N(\mathbb S^2)$, under the constraints mentioned in the last paragraph. Then (\ref{eq:mom_gal}) can be expressed as a system of moment equations
\begin{subequations}
\begin{align}
    \partial_t \mathbf{U} + \operatorname{div}_x(\mathbf{F}(\mathbf{U})) &= \mathbf{L}\mathbf{U}, 
    \label{eq:mom_sym_sys}
    \\ 
    (\mathbf{U}, \mathbf{F}(\mathbf{U}),\mathbf{L}\mathbf{U}) &= \int_{\mathbb{S}^2} \mathbf{m}(\Omega)\left(\beta(\boldsymbol{\lambda}^T\mathbf{m}(\Omega)),\ \Omega^T\beta(\boldsymbol{\lambda}^T\mathbf{m}(\Omega),\ L\beta(\boldsymbol{\lambda}^T\mathbf{m}(\Omega)\right) \mathrm{d}\Omega,
    \label{eq:mom_cons}
\end{align}
\end{subequations}
which possesses a symmetrizer constructed from $\eta$ (the anti-derivative of $\beta^{-1}$) and $\boldsymbol{\lambda}$ are the so-called entropic variables (\cite{Godlewski_Raviart_book}) in which (\ref{eq:mom_sym_sys}) can be written in the so-called symmetric hyperbolic form. 
Therefore, (\ref{eq:mom_sym_sys}) possesses a convenient structure for the study of its well-posedness (\cite{Kawashima-Yong}). 
\begin{remark}
The entropic variables $\boldsymbol{\lambda}$ introduced in (\ref{eq:mom_cons}) can also be conceived of as Lagrange multipliers that enforce the moment constraints in the so-called entropy minimization problem (\cite{levermore})
\begin{equation} \label{eq:entmin}
    \text{Find } \mathrm{argmin}\left\{ H(h): \int_{\mathbb S^2} \boldsymbol{m} I d\Omega =\int_{\mathbb S^2} \boldsymbol{m} h d\Omega \right\}.
\end{equation}
\end{remark}

\subsection{Position of the problem}
In this paper, we investigate several modifications of the renormalization map~\eqref{eq:def_betaK} and its impact on the closure~\cite{PhiRTE1}. The objective is to alter the convergence of the $\varphi$-divergence solution when $K \rightarrow +\infty$ either to accelerate the convergence or to modify the limit value:

First, when considering the moments of a Dirac distribution, e.g. $\mathbf{U} = \mathbf{m}(e_1) = \int_{\mathbb{S}^2} \mathbf{m} \delta_{e_1}$, numerical experiments in~\cite{PhiRTE1} showed that the reconstruction $\beta_K$ does converge in $H^{-2}$ toward the distribution $\delta_{e_1}$ when $K \rightarrow +\infty$. However, the rate of convergence is slow. This slow rate of convergence is also illustrated on Fig.~\ref{fig:renorm_beta_K} where the function $\beta_K(x)$ is plotted for various $K$ odd together with its limits $\lim\limits_{K\rightarrow \infty} \beta_K = \exp$. 
\begin{figure}[h!]
    \centering
    \includegraphics[width=.7\textwidth]{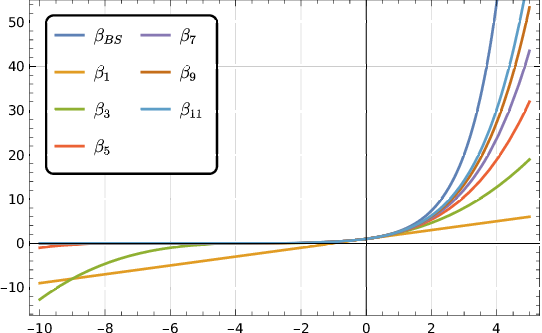}
    \caption{Renormalization mappings $\beta_K$ for odd $K$ and exponential function.}
    \label{fig:renorm_beta_K}
\end{figure}

 Formally, the Dirac distribution corresponds to the values $x \rightarrow -\infty$ and $x \rightarrow \infty$. Indeed, considering a Gaussian mollifier $\frac{1}{\sqrt{\pi\sigma}}\exp(-\frac{y^2}{\sigma})$, then $x=\frac{-y^2}{\sigma}$ takes for values $\pm\infty$ in the limit. 
 
 One observes that the sequence $(\beta_K)_{K\in\mathbb{N}}$ indeed converges pointwisely for bounded $x$ toward the exponential function, but this convergence is very slow. Furthermore, the rate of the exponential in the limit $x\rightarrow+\infty$ is not accurately captured and one needs high order $K$, and therefore higher complexity, to capture such large values. Similarly, the zero limit when $x\rightarrow -\infty$ can not be reached by any $\beta_K$ function with finite $K$ as they are polynomials and cannot have bounded value in $-\infty$. Eventually, such Dirac distributions can only be approximated and we only aim at improving the range of accuracy of such approximations.  

Second, when $K\rightarrow +\infty$, the sequence $(\beta_K)_{K\in\mathbb{N}}$ of approximations may only converge toward the exponential. As mentioned in the previous section, other types of equilibrium can be expected from the solution of the RTE, typically the Planck distribution $\beta_{BE}$ defined in~\eqref{eq:beta_BS_BE}. The $\beta_K$ approximation does not possess the flexibility to converge toward other $\beta = (\eta')^{-1}$ functions. 

Therefore, the objective in the next section is to provide another type of approximations which is flexible enough to control the convergence when $K\rightarrow \infty$, i.e. both the convergence rate and the limit function. 

\section{Other monotonically increasing polynomial approximations}
\label{sec:LCBC}
The solutions considered in this paper consist in \textit{polynomial} approximations. The integral of such polynomial functions can be computed exactly using an appropriate quadrature rule. Therefore, the coefficients $\boldsymbol{\lambda}$ of the approximation in~\eqref{eq:mom_sym_sys} can be obtained using Newton iterations~\cite{PhiRTE1} that can be computed exactly. Remark that rotation invariance is lost when constructing entropy-based closures (see e.g.~\cite{Hauck}) due to non-exact integration while the present choice of polynomial approximation allows to compute the integrals exactly. 

For the model to possess a convex entropy dissipated, we still need this approximation to be \textit{monotonically increasing}. Therefore, we require 
\[\beta' > 0.\] 

\subsection{Taylor expansion}
A first idea originated in the observation that the Taylor expansion of the exponential around zero converges faster (empirically) than the sequence $(\beta_K)_{K\in\mathbb{N}}$. This consists in writing in the shifted monomial basis $\mathbf{b} = \mathbf{b}^K(x) := \left(1,\dots,(x-x_0)^K\right)^T$
\begin{equation} 
    T_K(x) = \sum\limits_{k=0}^K \alpha_k (x-x_0)^k = \boldsymbol{\alpha}^T \mathbf{b}
\end{equation}
where $\alpha_k = \frac{\beta^{(k)}(x_0)}{k!}$ for a generic function $\beta$. We provide a simple characterization of monotonically increasing polynomials of this form. 
\begin{proposition}
    Suppose that $\beta \in \mathcal{C}^{2K+2}$ is such that $\beta^{(i)} \ge 0$ for all $1\le i\le 2K+2$. Then the polynomial $T_{2K+1}$ is monotonically increasing.   
\end{proposition}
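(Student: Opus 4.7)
The plan is to differentiate $T_{2K+1}$ once and show the result is nonnegative on the whole real line. Writing $y = x - x_0$, we have
\[ T_{2K+1}'(x) = \sum_{k=1}^{2K+1} \frac{\beta^{(k)}(x_0)}{(k-1)!}\, y^{k-1} = \sum_{j=0}^{2K} \frac{\beta^{(j+1)}(x_0)}{j!}\, y^{j}, \]
which is precisely the Taylor polynomial $T_{\beta'}^{2K}$ of $\beta'$ of degree $2K$ at $x_0$. The hypothesis supplies $\beta^{(j+1)}(x_0) \ge 0$ for $0 \le j \le 2K$, so every coefficient of this polynomial (in powers of $y$) is nonnegative.

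For $y \ge 0$, monotonicity is then immediate: the sum is a nonnegative combination of nonnegative monomials, hence $T_{2K+1}'(x) \ge 0$.

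For $y < 0$, the tricky case, I would instead invoke Taylor's theorem with Lagrange remainder applied to $\beta'$ at $x_0$. Since $\beta' \in \mathcal{C}^{2K+1}$, there exists $\xi$ between $x_0$ and $x$ with
\[ \beta'(x) = T_{\beta'}^{2K}(x) + \frac{(\beta')^{(2K+1)}(\xi)}{(2K+1)!}\, y^{2K+1} = T_{2K+1}'(x) + \frac{\beta^{(2K+2)}(\xi)}{(2K+1)!}\, y^{2K+1}. \]
Rearranging,
\[ T_{2K+1}'(x) = \beta'(x) - \frac{\beta^{(2K+2)}(\xi)}{(2K+1)!}\, y^{2K+1}. \]
Because $y < 0$ is raised to the odd power $2K+1$, the term $y^{2K+1}$ is negative, while $\beta^{(2K+2)}(\xi) \ge 0$ by hypothesis; thus the subtracted quantity is nonpositive. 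Combined with $\beta'(x) \ge 0$, this gives $T_{2K+1}'(x) \ge \beta'(x) \ge 0$, completing the second case and therefore proving monotonicity.

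The only mildly delicate step is handling $y < 0$: one cannot read nonnegativity directly off the coefficients in the shifted monomial basis, since the odd powers of a negative $y$ flip sign. The key trick is that the remainder estimate forces the negative contributions of those odd monomials to be dominated by $\beta'(x)$ itself, which is why the hypothesis is imposed up through order $2K+2$ (rather than $2K+1$) and why the degree is taken odd (so that the final monomial $y^{2K+1}$ is the one whose sign the remainder controls).
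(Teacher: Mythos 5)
Your proof is correct and follows essentially the same route as the paper's: for $x\le x_0$ you control $T_{2K+1}'$ from below by $\beta'$ via the Lagrange remainder of the degree-$2K$ Taylor polynomial of $\beta'$ (noting the sign of $(x-x_0)^{2K+1}$), and for $x\ge x_0$ you use nonnegativity of the coefficients. If anything, your write-up is slightly more careful — you make the $x>x_0$ case explicit and have the correct factor $(2K+1)!$ in the remainder, where the paper writes $(2K+2)!$.
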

\begin{proof}
    This simply follows from the Taylor formula with remainder of the derivative of $f$: there exists $\xi \in [x,x_0]$ such that
    \[ \beta'(x) - T_{2K+1}'(x) = \frac{\beta^{(2K+2)}(\xi)}{(2K+2)!}(x-x_0)^{2K+1}, \]
    which is negative for $x < x_0$. Therefore, for $x \le x_0$
    \[ \sum\limits_{i=0}^{2n} \frac{f^{(i+1)}(x_0)}{i!} (x-x_0)^i  \ge f'(x) \ge 0. \]
    The derivative $T_{2K+1}'(x)$ is also positive for $x>x_0$, then it is monotonically increasing. 
\end{proof}
All the derivatives of the exponential $\beta_{BS} = \exp$ are positive, then it satisfies this property and the polynomial 
\[ T_{2K+1}(x) = \sum\limits_{k=0}^{2K+1} \frac{e^{x_0}}{k!} (x-x_0)^k \]
is monotonically increasing for odd degree $2K+1$. The convergence radius of this sequence is infinite. Therefore, we have convergence of the approximation toward the desired results for all $x\in\mathbb{R}$
\[ T_{2K+1}(x) \underset{K\longrightarrow \infty}{\longrightarrow} \exp(x). \]

Concerning the Planck function $\beta_{BE}$ minimizing the Bose-Einstein entropy, it reads for $x\in\mathbb{R}^{*,-}$ 
\[ \beta_{BE}(x) = \frac{1}{e^{-x}-1} > 0,\]
and one verifies that its derivative satisfies $\beta_{BE}' = (1+\beta_{BE})\beta_{BE}$ such that all the successive derivatives of $\beta_{BE}$ are polynomials in $\beta_{BE}$ with positive coefficients. Especially, those derivatives are all strictly positive for all $x\in\mathbb{R}^{*,-}$ since $\beta_{BE}>0$. Therefore, the polynomial $T_{2K+1}$ of odd degree with the Planck function are also monotonically increasing. The convergence radius $\delta$ of this sequence remains bounded and it depends on the chosen point of expansion $x_0$. This radius $\delta < |x_0|$ since the function $\beta_{BE}$ is singular in zero (and undefined after). Therefore, we only have convergence of the approximation toward the desired results for all $x\in (x_0-\delta,x_0+\delta) \subsetneq \mathbb{R}^{*,-}$
\[ T_{2K+1}(x) \underset{K\to \infty}{\to} \beta_{BE}(x). \]
Especially, we do not have convergence $T_{2K+1}(x) \underset{K\to \infty}{\not\to} \beta_{BE}(x)$ for all points $x < 2x_0$.
These are illustrated numerically in Section~\ref{subsec:comparisons_approx} below, which even exhibit divergence for such values of $x$.

\subsection{Optimized parameters} 
Another idea is to minimize the $\mathcal{L}^2$ difference between the function $\beta$ to approximate ($\beta_{BS}$ or $\beta_{BE}$) and the polynomials of a given degree, with the constraint that the polynomials must be monotonically increasing. This can be mathematically represented as
\begin{align}
O_{2K+1} = & \ \underset{p \in \mathbb{P}_{2K+1}}{\text{argmin}} \ \frac{1}{2} \int_a^b |p(x) - \beta(x)|^2 \,\mathrm{d}x, \label{eq:def_L2_Opt}\\
& \ \text{subject to } p'(x) \geq 0, \quad \forall x \in \mathbb{R}. \nonumber
\end{align}

\subsubsection{Reformulation of the approximation}
In order to enforce the constraint, we use the fact that all non-negative one-variable polynomials can be represented as the sum of two squares, one of which has a lower degree than the other (\cite{Lasserre_book,Schmuedgen_book,Szego_book}). Thus, the derivative of the polynomial $O_{2K+1}(x)$ has the form:
\begin{displaymath}
O_{2K+1}'(x) = (a_0 + a_1 x + \cdots + a_K x^K)^2 + (b_0 + b_1 x + \cdots + b_{K-1} x^{K-1})^2.
\end{displaymath}
Integrating this equation gives
\begin{equation}
\label{eq:admissible_solution}
\begin{split}
O_{2K+1}(x) = C &+ \sum_{i=0}^K \sum_{j=0}^K (a_i a_j + b_i b_j) \frac{x^{i+j+1}}{i+j+1} \\
= C &+ \sum_{n=1}^{K+1} \sum_{i=0}^{n-1} (a_i a_{n-1-i} + b_i b_{n-1-i}) \frac{x^n}{n} \\ 
&+ \sum_{n=K+2}^{2K+1} \sum_{i=n-1-K}^{K} (a_i a_{n-1-i} + b_i b_{n-1-i}) \frac{x^n}{n},
\end{split}
\end{equation}
where we assumed $b_K = 0$. With this form of $O_{2K+1}$, we can turn the optimization problem into an unconstrained optimization problem. For simplicity, we define intermediate parameters
\begin{displaymath}
\alpha_0 = C, \qquad
\alpha_n = \frac{1}{n} \sum_{i=\max(0,n-1-k)}^{\min(k,n-1)} (a_i a_{n-1-i} + b_i b_{n-1-i}), \quad n=1,\cdots,2K+1,
\end{displaymath}
so that $\alpha_n$ is the $n$-th coefficient of the polynomial $O_{2K+1}$. Let
\begin{displaymath}
\boldsymbol{w} = (C, a_0, \cdots, a_K, b_0, \cdots, b_{K-1})^T, \qquad \boldsymbol{\alpha}(\boldsymbol{w}) = (\alpha_0, \alpha_1, \cdots, \alpha_{2K+1})^T.
\end{displaymath}
Then the Jacobian matrix $J = \partial \boldsymbol{\alpha} / \partial \boldsymbol{w}$ has the following form:
\begin{displaymath}
J = \begin{pmatrix}
1 & 0 & 0 \\
0 & 2A & 2B
\end{pmatrix},
\end{displaymath}
where
\begin{displaymath}
A = \begin{pmatrix}
a_0               &  0                  & \dots               &        & \dots  & 0\\[5pt]
\frac{a_1}{2}     & \frac{a_0}{2}       & 0                   &        &        & \vdots \\[5pt]
\frac{a_2}{3}     & \frac{a_1}{3}       & \frac{a_0}{3}       & \ddots &        & \\
\vdots            & \frac{a_2}{4}       & \frac{a_1}{4}       & \ddots & \ddots & \vdots \\
\vdots            & \vdots              & \frac{a_2}{5}       & \ddots & \ddots & 0 \\
\frac{a_{K}}{K+1} & \vdots              & \vdots              & \ddots & \ddots & \frac{a_0}{K+1} \\
0                 & \frac{a_{K}}{K+2} & \vdots              &        & \ddots & \frac{a_1}{K+2} \\
\vdots            & \ddots              & \frac{a_{K}}{K+3} &        &        & \frac{a_2}{K+3} \\
                  &                     & \ddots              & \ddots &        & \vdots \\
                  &                     &                     &        & \ddots & \vdots \\
                  &                     &                     &        & \ddots & \frac{a_{K}}{2K+1}\\
                  &                     &                     &        &        & 0 \\
\vdots &  &  &  &  & \vdots \\
0 & \dots &  &  & \cdots & 0
\end{pmatrix},  \qquad
B = \begin{pmatrix}
b_0               &  0                  & \dots               &        & \dots  & 0\\[5pt]
\frac{b_1}{2}     & \frac{b_0}{2}       & 0                   &        &        & \vdots \\[5pt]
\frac{b_2}{3}     & \frac{b_1}{3}       & \frac{b_0}{3}       & \ddots &        & \\
\vdots            & \frac{b_2}{4}       & \frac{b_1}{4}       & \ddots & \ddots & \vdots \\
\vdots            & \vdots              & \frac{b_2}{5}       & \ddots & \ddots & 0 \\
\frac{b_{K-1}}{K} & \vdots              & \vdots              & \ddots & \ddots & \frac{b_0}{K} \\
0                 & \frac{b_{K-1}}{K+1} & \vdots              &        & \ddots & \frac{b_1}{K+1} \\
\vdots            & \ddots              & \frac{b_{K-1}}{K+2} &        &        & \frac{b_2}{K+2} \\
                  &                     & \ddots              & \ddots &        & \vdots \\
                  &                     &                     &        & \ddots & \vdots \\
                  &                     &                     &        & \ddots & \frac{b_{K-1}}{2K-1}\\
                  &                     &                     &        &        & 0 \\
\vdots &  &  &  &  & \vdots \\
0 & \dots &  &  & \cdots & 0
\end{pmatrix}.
\end{displaymath}

To solve the optimization problem, we first reformulate the objective function $\boldsymbol{w} \mapsto f(\boldsymbol{\alpha}(\boldsymbol{w}))$ as a function of the intermediate parameters $\boldsymbol{\alpha}$: 
\begin{displaymath}
\begin{split}
f(\boldsymbol{\alpha}) & = \frac{1}{2} \int_a^b \left| \sum_{n=0}^{2K+1} \alpha_n x^n  - \beta(x) \right|^2 \,\mathrm{d}x \\
&= \frac{1}{2} \boldsymbol{\alpha}^T M \boldsymbol{\alpha} - \boldsymbol{\beta}^T \boldsymbol{\alpha} + \frac{1}{2} \int_a^b \beta(x)^2 \,\mathrm{d}x.
\end{split}
\end{displaymath}
Note that the value of the last integral does not matter in our optimization problem, and the matrix $M$ and the vector $\boldsymbol{\beta}$ are given by
\begin{gather*}
 M_{i,j} = \frac{b^{i+j+1} - a^{i+j+1}}{i+j+1}, \qquad
\beta_j = \int_a^b x^j \beta(x) dx.
\end{gather*}
In the case of the exponential $\beta = \beta_{BS}$, the second coefficient rewrites
\[\beta_j = (-1)^j \left[\Gamma(j+1, -b) - \Gamma(j+1,-a)\right], \]
where the incomplete $\Gamma$ function is well-implemented in standard numerical libraries. In the case $\beta=\beta_{BE}$, the integral can be represented using the polylogarithmic function $\operatorname{Li}_s(z)$:
\begin{displaymath}
  \beta_j = \sum_{k=0}^j \frac{(-1)^{j+k} j!}{k!} \left[
    b^k \operatorname{Li}_{j+1-k} (\mathrm{e}^b) - 
    a^k \operatorname{Li}_{j+1-k} (\mathrm{e}^a)
  \right].
\end{displaymath}

\subsubsection{Details on the numerical computations}
The minimum of $f(\boldsymbol{\alpha}(\boldsymbol{w}))$ is attained where the gradient anihilates. Then we need to solve the nonlinear system $\nabla_{\boldsymbol{w}} f(\boldsymbol{\alpha}(\boldsymbol{w})) = 0$, or
\begin{equation}
[J(\boldsymbol{w})]^T M \boldsymbol{\alpha}(\boldsymbol{w}) = 0. \label{eq:gradient_zero_OK}
\end{equation}
Since $J(\boldsymbol{w})$ is linear and $\boldsymbol{\alpha}(\boldsymbol{w})$ is quadratic, these are actually cubic equations in $\boldsymbol{w}$. 

In the following, we compute numerically some parameters $\boldsymbol{w}$ and $\boldsymbol{\alpha}(\boldsymbol{w})$ by using Newton's method for~\eqref{eq:gradient_zero_OK}. Note that the $f(\boldsymbol{\alpha}(\boldsymbol{w}))$ is a quartic function which is infinity in infinity, but it might not be convex at all $\boldsymbol{w} \in \mathbb{R}^{2K+2}$, and it may possess several local extrema. Therefore, we cannot guarantee the convergence of Newton's method toward a global minimum. In practice, we choose $500$ initial values of $\boldsymbol{w}$ randomly and pick the final solution with the minimum value of the objective function.

Using the density of the (positive) polynomials in $\mathcal{L}^2$, we obtain that this approximation converges in $\mathcal{L}^2(a,b)$ toward the desired $\beta$ function. However, the convergence is again restricted to the chosen interval $(a,b)$ and we can not guarantee convergence out of it. These are illustrated numerically in the next paragraph.    

\subsection{Comparing the monotonic polynomial approximations} 
\label{subsec:comparisons_approx}

\subsubsection{Approximation for the Boltzmann-Shannon entropy}
The approximations of the exponential $\beta_{BS}$ are compared in Fig.~\ref{fig:optimized_bs} for different degrees $2K+1$. The point of expansion is chosen to be $x_0 = 0$ for the Taylor approximation and the range for the optimized approximation is $[-L,L]$ with $L=1,3,5$. In general, by comparing the $\beta_{2K+1}$ model and the $T_{2K+1}$ model, the $\beta_{2K+1}$ model gives a better approximation for $x \in \mathbb{R}^{*,-}$, while the $T_{2K+1}$ model does better for $x \in \mathbb{R}^{*,+}$. For the optimized approximation, the function depends on the choice of the range $[-L,L]$. As expected, when $K$ is fixed, the function is approximated within the range $[-L,L]$ for smaller values of $L$. It balances the quality of the approximation for both the negative and positive parts, and generally looks better than both $\beta_{2K+1}$ and $T_{2K+1}$ in the plots with linear scales (left panel of Fig. \ref{fig:optimized_bs}). Similar phenomena can be observed in Fig.~\ref{fig:comparison_bs}, where all the results for the optimized approximation are given for $L = 5$. The log plots show a clear difference between the $O_{2K+1}$ model and the $\beta_{2K+1}$ and $T_{2K+1}$ models: for the latter, the entire approximation is below the exact exponential function, whereas the $O_{2K+1}$ approximation oscillates around the exponential, which is a typical behavior in spectral approximations.

Another interesting phenomenon that can be observed from Fig.~\ref{fig:optimized_bs} is that the $O_{2K+1}$ approximation seems to have a good capability in extrapolations. For instance, the red curve in Fig.~\ref{fig:K5_bs} is computed by minimizing the $\mathcal{L}^2$ distance~\eqref{eq:def_L2_Opt} only on the interval $[-1,1]$, but this approximation is also quite accurate for $x\in [-3,5]$. Since the choice of the approximation range usually depends on some preliminary estimation of the problem, this property allows us to use the $O_{2K+1}$ approximations without too much worries about the function values out of the chosen range in actual simulations. Here we conjecture that for any fixed interval, the $O_{2K+1}$ model will converge to the function $\beta_{BS}$ as $K$ tends to infinity.
\begin{figure}[h!]
\centering
\subfloat[$K=1$]{\includegraphics[width=.45\textwidth]{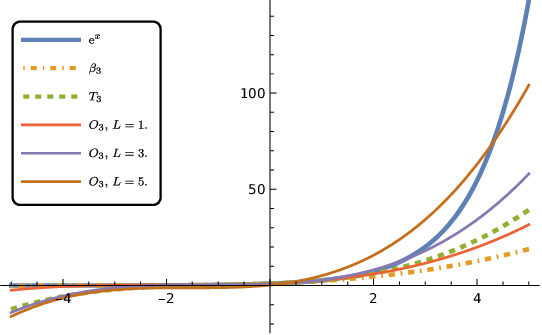}} \qquad 
\subfloat[$K=1$, log plot]{\includegraphics[width=.45\textwidth]{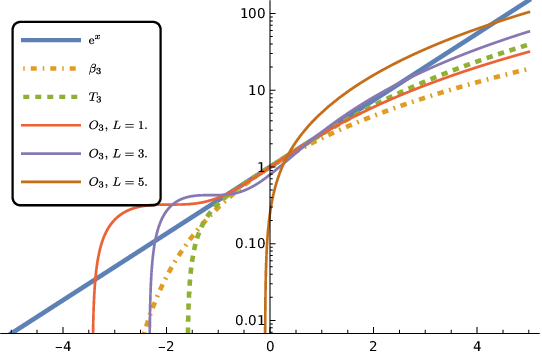}} \\
\subfloat[$K=3$]{\includegraphics[width=.45\textwidth]{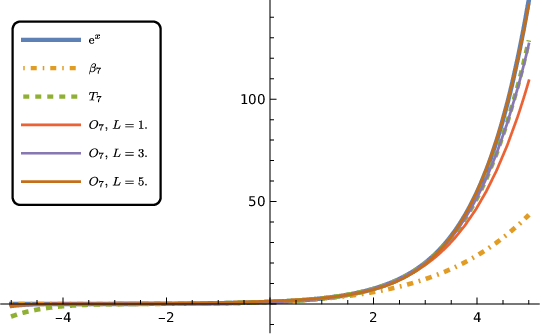}} \qquad
\subfloat[$K=3$, log plot]{\includegraphics[width=.45\textwidth]{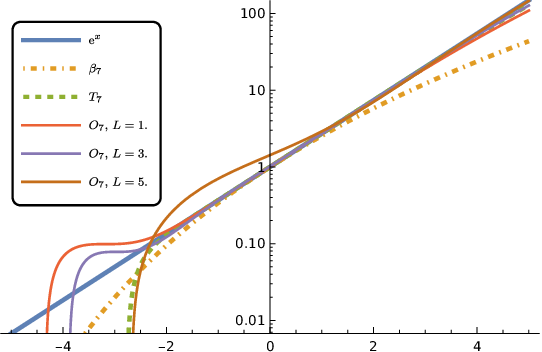}} \\
\subfloat[$K=5$]{\includegraphics[width=.45\textwidth]{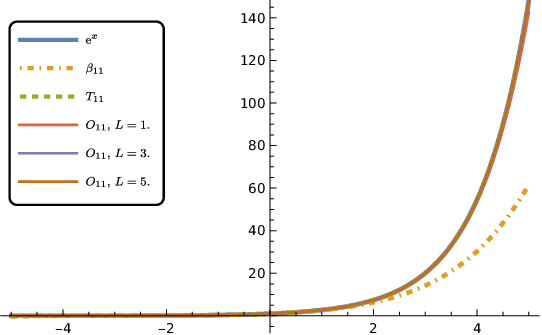}} \qquad
\subfloat[$K=5$, log plot]{\label{fig:K5_bs}\includegraphics[width=.45\textwidth]{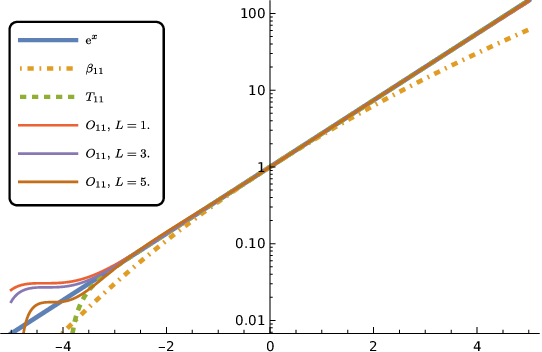}} 
\caption{$\beta_{2K+1}$, Taylor $T_{2K+1}$ and optimized $O_{2K+1}$ approximations of the exponential function $\beta_{BS}$.}
\label{fig:optimized_bs}
\end{figure}

\begin{figure}[h!]
\centering
\subfloat[$\beta_N$]{\includegraphics[width=.45\textwidth]{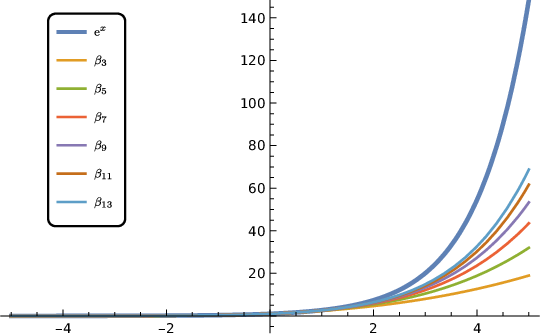}} \qquad
\subfloat[$\beta_N$, log plot]{\includegraphics[width=.45\textwidth]{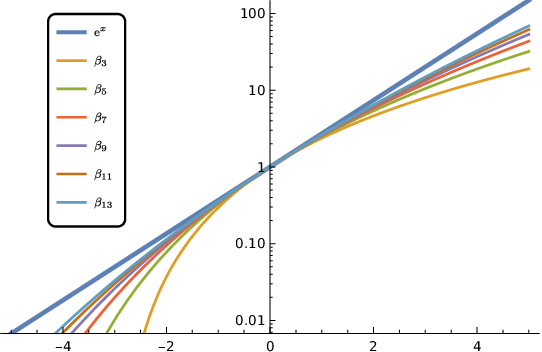}} \\
\subfloat[$T_N$]{\includegraphics[width=.45\textwidth]{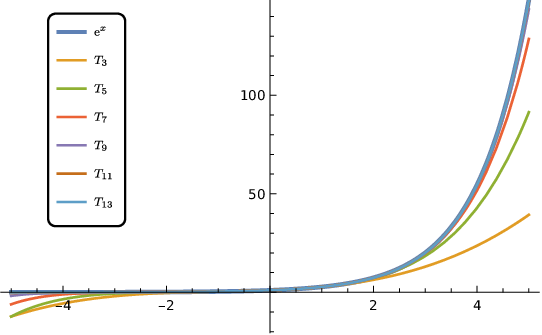}} \qquad
\subfloat[$T_N$, log plot]{\includegraphics[width=.45\textwidth]{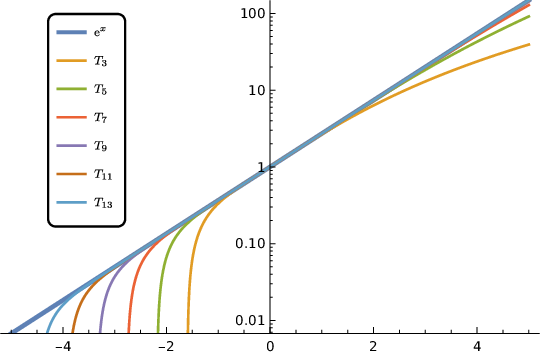}} \\
\subfloat[$O_N$]{\includegraphics[width=.45\textwidth]{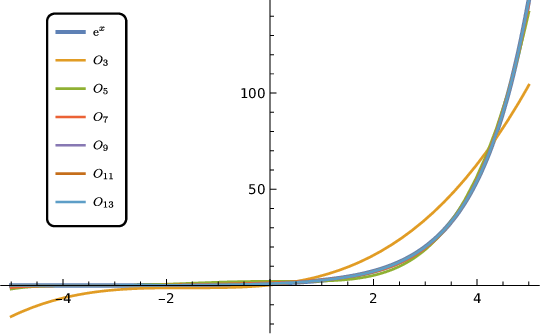}} \qquad 
\subfloat[$O_N$, log plot]{\includegraphics[width=.45\textwidth]{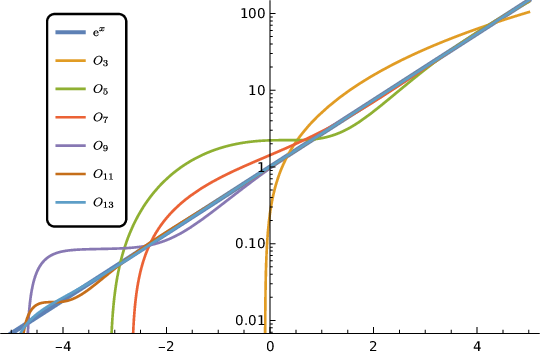}}
\caption{$\beta_{2K+1}$, Taylor $T_{2K+1}$ and optimized $O_{2K+1}$ approximations of the exponential function $\beta_{BS}$.}
\label{fig:comparison_bs}
\end{figure}

Fig.~\ref{fig:error_bs} plots the $\mathcal{L}^2(-L,L)$ difference between the exponential function $\beta_{BS}$ and the $O_{2K+1}$ approximations. Unsurprisingly, the error increases as $L$ increases and decreases as $K$ increases. For a fix $L$, the gaps between lines are nearly the same, showing the spectral convergence rate with respect to $K$. When $L$ gets larger, the gap becomes narrower, indicating slower convergence. For a fixed $K$, the figure implies that the error increases in the form of $L^{\alpha}$ for a certain value of $\alpha$ depending on $K$.
\begin{figure}[!ht]
\centering
\includegraphics[width=.8\textwidth]{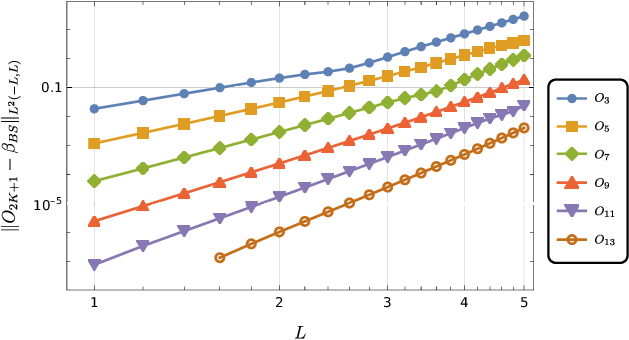}
\caption{Error plot for the $O_{2K+1}$ approximations of the exponential function $\beta_{BS}$}
\label{fig:error_bs}
\end{figure}

\subsubsection{Approximation for the Bose-Einstein entropy}
Similar experiments are done for the Planckian $\beta_{BE}$. The results are plotted in Fig.~\ref{fig:optimized_be} and \ref{fig:comparison_be}. Since the function is defined only for negative values, we choose the range of approximation to be $[-L, -1/L]$ with $L = 2,6,10$. In Fig.~\ref{fig:optimized_be}, three different choices of $x_0$ are considered for the Taylor expansion, and the choices are made with $x_0 = -(L + 1/L)/2$, which is the center of the interval $[-L, -1/L]$ used in the $O_{2K+1}$ approximation. The general behavior is similar to the case of Boltzmann-Shannon entropy: the optimized approximation fits the Planckian better within the range $[-L,-1/L]$, but it may perform worse than the Taylor approximation out of this interval. The convergence with respect to $K$ can be better observed in Fig.~\ref{fig:comparison_be}, where all Taylor series are expanded about the same point $x_0 = -3.08333$, and the value of $L$ is fixed to be $6$ for all optimized approximations. Compared with Taylor approximations, the optimized approach better approximates the part with larger function values, which suppresses the $\mathcal{L}^2$ error more efficiently. Note that both the $T_{2K+1}$ and $O_{2K+1}$ approximations are increasing functions across the entire real axis $\mathbb{R}$, despite the seemingly oscillatory behavior of $O_{2K+1}$ functions.

\begin{figure}[h!]
\centering
\subfloat[$K=1$]{\includegraphics[width=.45\textwidth]{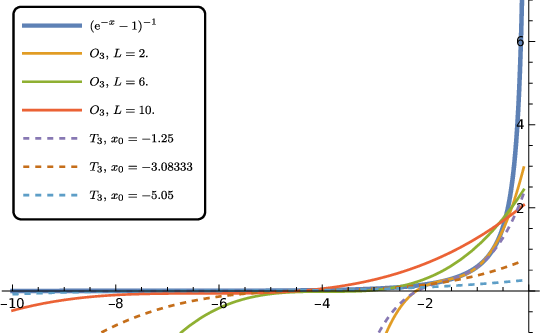}} \qquad 
\subfloat[$K=1$, log plot]{\includegraphics[width=.45\textwidth]{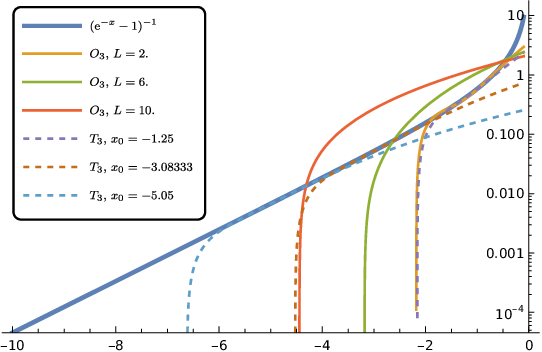}} \\
\subfloat[$K=3$]{\includegraphics[width=.45\textwidth]{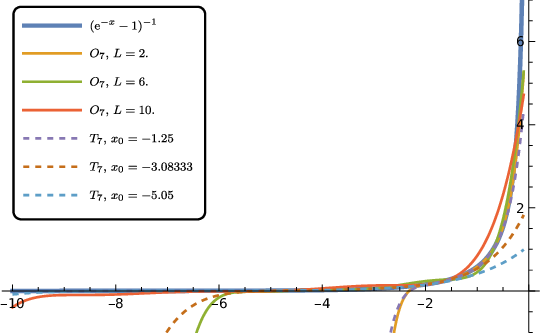}} \qquad
\subfloat[$K=3$, log plot]{\includegraphics[width=.45\textwidth]{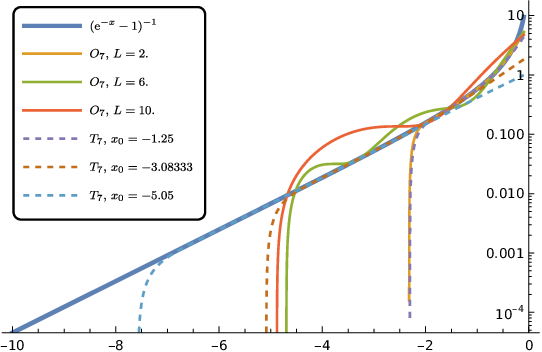}} \\
\subfloat[$K=5$]{\includegraphics[width=.45\textwidth]{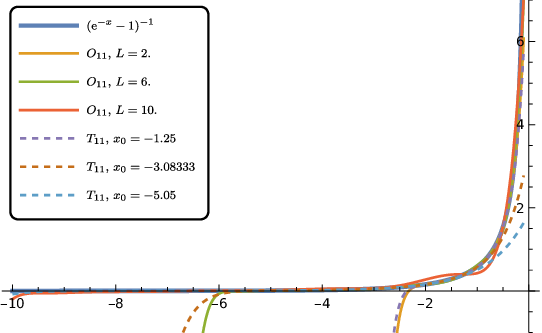}} \qquad
\subfloat[$K=5$, log plot]{\label{fig:K5_be}\includegraphics[width=.45\textwidth]{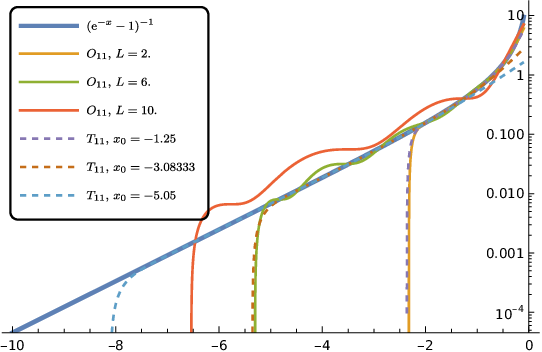}} 
\caption{Taylor $T_{2K+1}$ and optimized $O_{2K+1}$ approximations of the Planckian $\beta_{BE}$.}
\label{fig:optimized_be}
\end{figure}

\begin{figure}[h!]
\centering
\subfloat[$T_{2K+1}$]{\includegraphics[width=.45\textwidth]{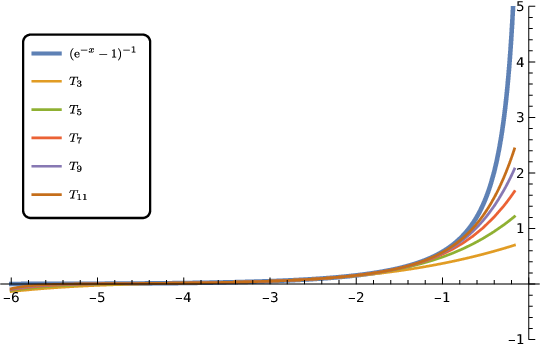}} \qquad
\subfloat[$T_{2K+1}$, log plot]{\includegraphics[width=.45\textwidth]{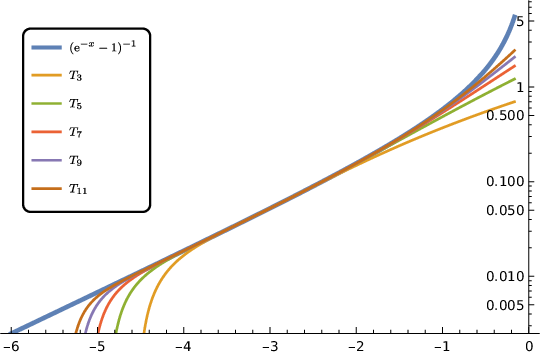}} \\
\subfloat[$O_{2K+1}$]{\includegraphics[width=.45\textwidth]{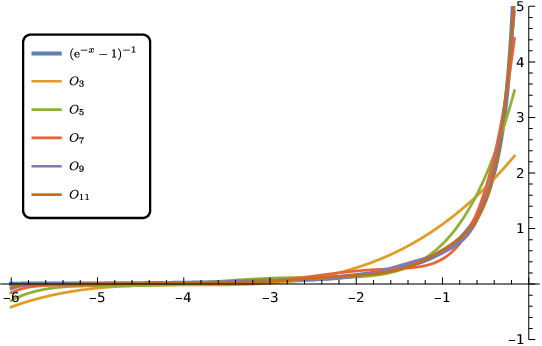}} \qquad 
\subfloat[$O_{2K+1}$, log plot]{\includegraphics[width=.45\textwidth]{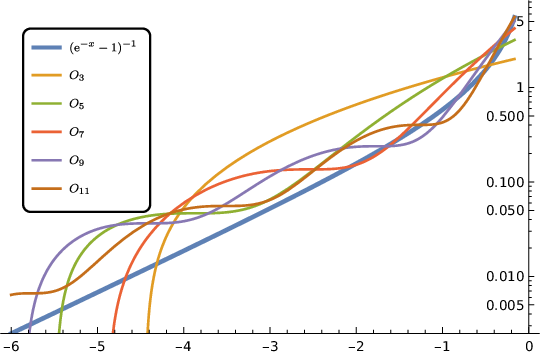}}
\caption{Taylor $T_{2K+1}$ and optimized $O_{2K+1}$ approximations of the Planckian $\beta_{BE}$.}
\label{fig:comparison_be}
\end{figure}

The $\mathcal{L}^2(-L,-1/L)$ error of the $O_{2K+1}$ approximation for different $K$ and $L$ is given in Fig.~\ref{fig:error_be}, where we can again observe the spectral convergence with respect to $K$ for fixed $L$, and the convergence rates are lower for larger intervals. Comparing Fig.~\ref{fig:error_be} with Fig.~\ref{fig:error_bs}, we can find that the $\mathcal{L}^2$ error for the Bose-Einstein entropy is significantly larger. This is likely due to the singularity of the Planckian at zero. No polynomial possesses the same property, making the function more difficult to approximate using polynomials. The exponential function, however, tends to infinity only when $x$ tends to infinity, which all polynomials with positive leading coefficients also satisfy.

\begin{figure}[!ht]
\centering
\includegraphics[width=.8\textwidth]{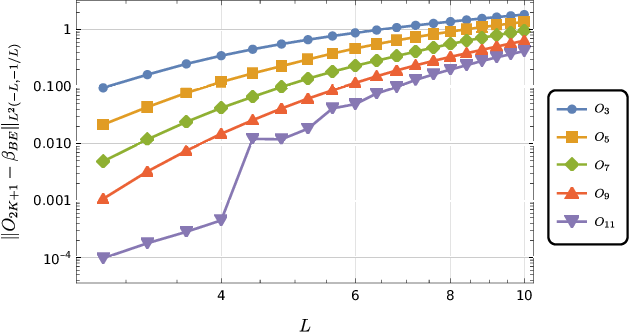}
\caption{Error plot for the $O_{2K+1}$ approximations of the exponential function $\beta_{BS}$}
\label{fig:error_be}
\end{figure}

\section{Numerical approximation of specific distributions}
\label{sec:numerics}
To complete this study, we reproduce simulations from~\cite{PhiRTE1} with the different approximations and compare their results. These consist in approximating distributions that correspond to physical regime, namely a near-beam distribution, a distribution corresponding to two beams crossing each others, and smooth distributions.    

Finding the approximation requires solving the moment inversion problem:
\begin{equation}
\label{eq:mnt_inv}
  \int_{\mathbb{S}^2} \mathbf{m}(\Omega) \beta(\boldsymbol{\lambda}^T \mathbf{m}(\Omega)) \,\mathrm{d}\Omega = \mathbf{U},
\end{equation}
for a given vector of moments $\mathbf{U}$ of specific distributions. The vector function $\mathbf{m}(\Omega)$ is chosen as all real spherical harmonics up to a certain degree $N$. Here the function $\beta(\cdot)$ is either the $\beta_{2K+1}$, $T_{2K+1}$ or $O_{2K+1}$ approximation of the exponential function $\beta_{BS}$, and either the $T_{2K+1}$ or $O_{2K+1}$ approximation of the Planckian $\beta_{BE}$. The right-hand side $\mathbf{U}$ is given by the moments of a given function, which means we first choose a function $I(\Omega)$, and then set
\begin{displaymath}
  \mathbf{U} = \int_{\mathbb{S}^2} \mathbf{m}(\Omega) I(\Omega) \,\mathrm{d}\Omega.
\end{displaymath}
After solving $\boldsymbol{\lambda}$ from \eqref{eq:mnt_inv}, the function $\beta(\boldsymbol{\lambda}^T \mathbf{m}(\Omega))$ is regarded as an approximation of $I(\Omega)$. For clarification, we will add the subscript $N$ to the name of the model to denote the moment method, the first subscript $N$ refers to the moment order and the second $2K+1$ to the degree of the polynomial approximation $\beta_{2K+1}$, $T_{2K+1}$ or $O_{2K+1}$. For example, if we use spherical harmonics up to degree $N$ in $\Omega$ and choose $\beta(\cdot)$ to be $T_{2K+1}$, the model is denoted as $T_{N,2K+1}$. Similarly, we will also consider the $\beta_{N,2K+1}$ and $O_{N,2K+1}$ models below.

The equation \eqref{eq:mnt_inv} is solved by Newton's method, for which we need to compute the Jacobian
\begin{displaymath}
\int_{\mathbb{S}^2} \mathbf{m}(\Omega) [\mathbf{m}(\Omega)]^T \beta'(\boldsymbol{\lambda}^T \mathbf{m}(\Omega)) \,\mathrm{d}\Omega.
\end{displaymath}
Since the integrand is a polynomial of $\Omega$, the integral can be computed exactly using appropriate integration formulas. Here we adopt the Lebedev quadrature (\cite{Lebedev1976quadratures, Lebedev1999quadrature}) as in \cite{PhiRTE1}. The number of quadrature points is chosen such that the degree of the quadrature is no less than the degree of the polynomial.

\subsection{Single beam approximation}
In this section, we apply these approximate entropy models in the approximation of a single beam. Since all the models are rotationally invariant, the direction of the beam does not affect the result. For simplicity, we consider the approximation of the Dirac-delta function
\begin{displaymath}
I(\Omega) = \delta(\Omega - \Omega_0),
\end{displaymath}
where $\Omega_0 = (0,0,1)^T$. Some approximations for the Boltzmann-Shannon entropy with $N = 1$ and $K = 2$ are given in Fig.~\ref{fig:single_beam_M1_BS}. The plots show that the $\beta_{1,5}$ model gives a remarkably better result than the $T_{1,5}$. This is not surprising because the value of the approximate function is all below $1.0$, which corresponds to $\exp(x)$ with a negative $x$, where the $\beta_{1,5}$ model can give a better approximation. The quality of the $O_{1,5}$ model shown in Fig.~\ref{fig:single_beam_M1_BS_O5} then lies in-between, since it approximates the exponential on the interval $[-5,5]$, which balances both the positive and the negative parts. In order to improve the result, we can shift the domain to the negative side as in Fig.~\ref{fig:single_beam_M1_BS_O10}, so that a result similar to the $\beta_{1,5}$ model can be obtained.

\begin{figure}[h!]
\centering
\subfloat[$\beta_{1,5}$]{\includegraphics[width=.45\textwidth]{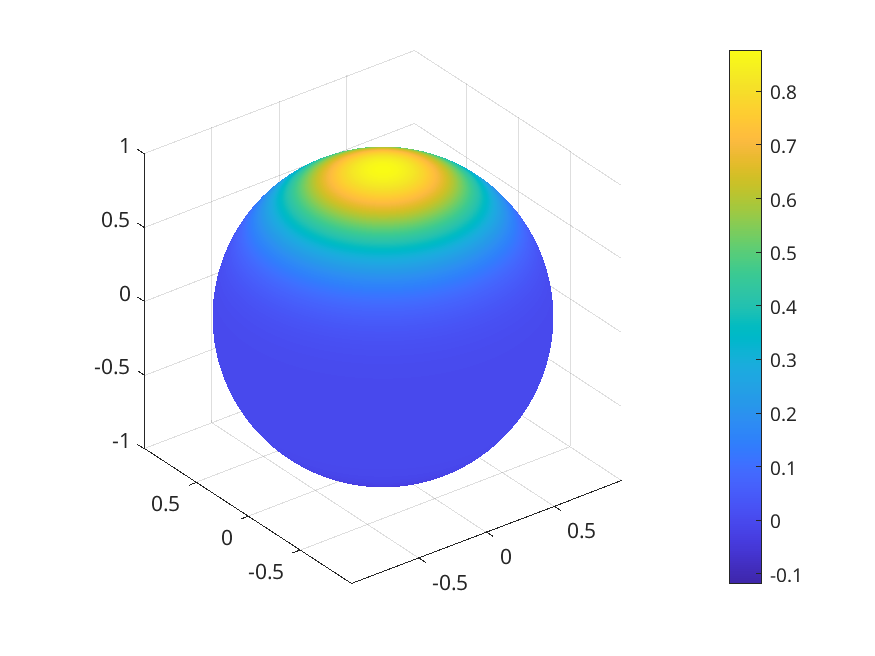}} \qquad
\subfloat[$T_{1,5}$, B-S entropy, $x_0 = 0$]{\includegraphics[width=.45\textwidth]{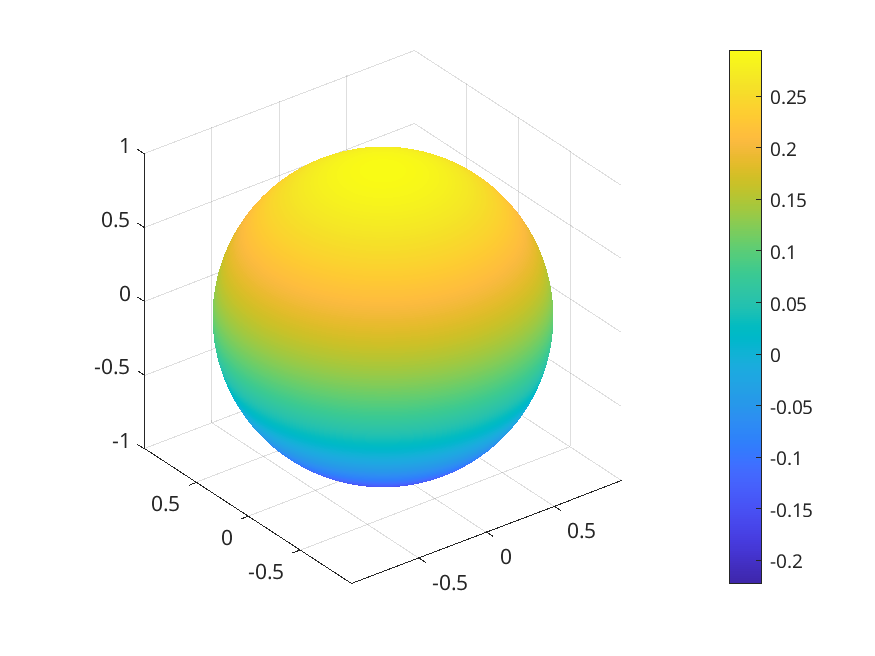}} \\
\subfloat[$O_{1,5}$, B-S entropy, {$[-5,5]$}]{\label{fig:single_beam_M1_BS_O5}\includegraphics[width=.45\textwidth]{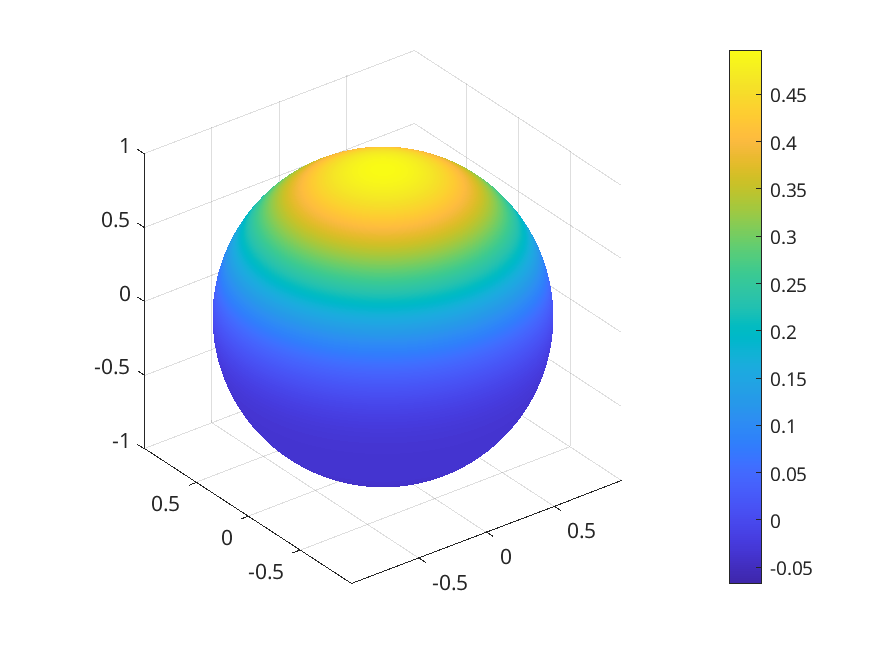}} \qquad
\subfloat[$O_{1,5}$, B-S entropy, {$[-10,0]$}]{\label{fig:single_beam_M1_BS_O10}\includegraphics[width=.45\textwidth]{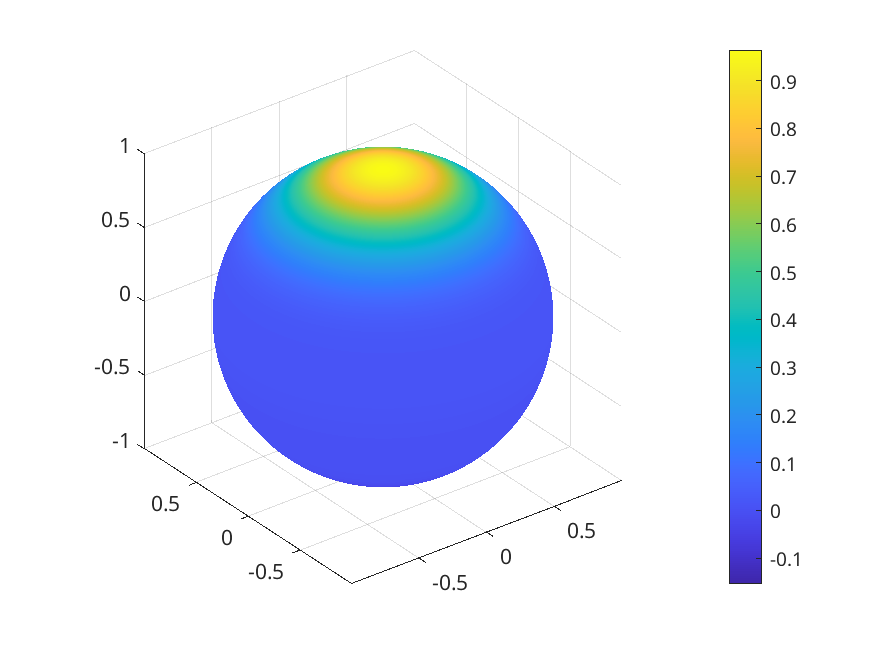}}
\caption{Approximation of a single beam using different models approximating the result of the maximum Boltzmann-Shannon entropy.}
\label{fig:single_beam_M1_BS}
\end{figure}

Similar phenomena can be observed when the approximation of the Bose-Einstein entropy is applied. The results are plotted in Fig.~\ref{fig:single_beam_M1_BE}. For both $T_{1,2K+1}$ and $O_{1,2K+1}$ models, the approximate intensity function is closer to $I(\Omega)$ if the parameters are chosen to fit the range of the function values. 
\begin{figure}[h!]
\centering
\subfloat[$T_{1,5}$, B-E entropy, $x_0 = -2.6$]{\includegraphics[width=.45\textwidth]{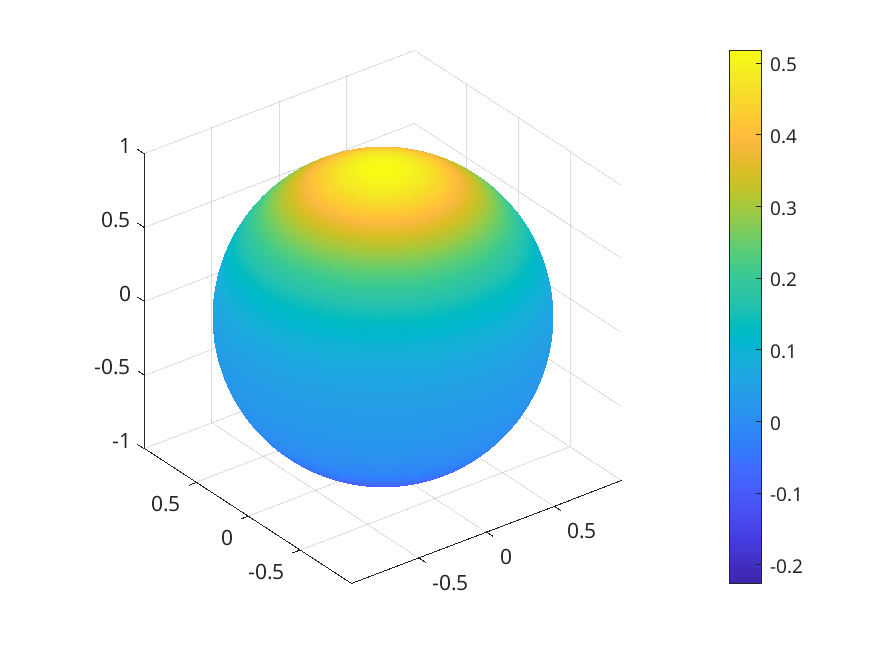}} \qquad
\subfloat[$T_{1,5}$, B-E entropy, $x_0= -5.5$]{\includegraphics[width=.45\textwidth]{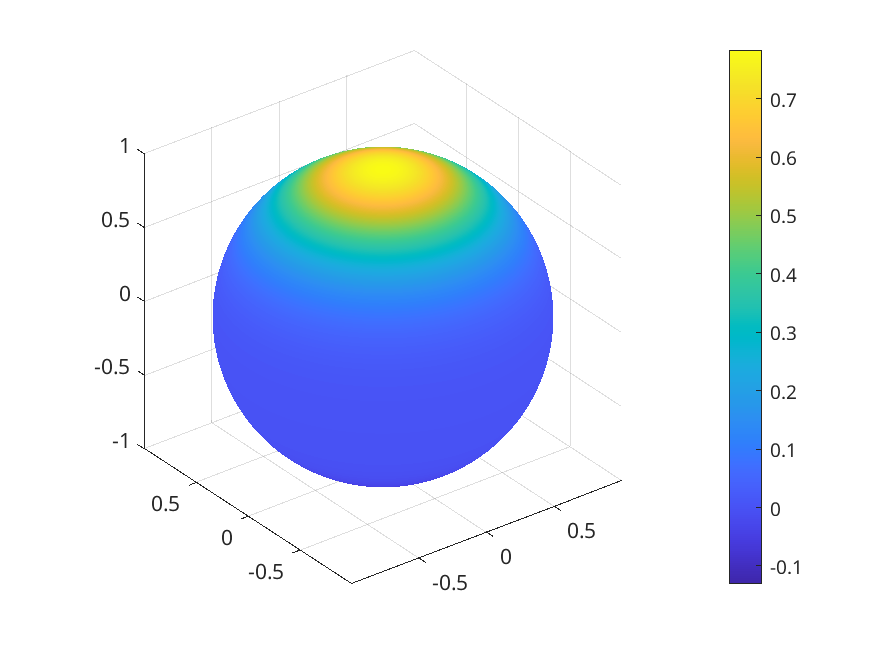}} \\
\subfloat[$O_{1,5}$, B-E entropy, {$[-5,-0.2]$}]{\label{fig:single_beam_M1_BE_O5}\includegraphics[width=.45\textwidth]{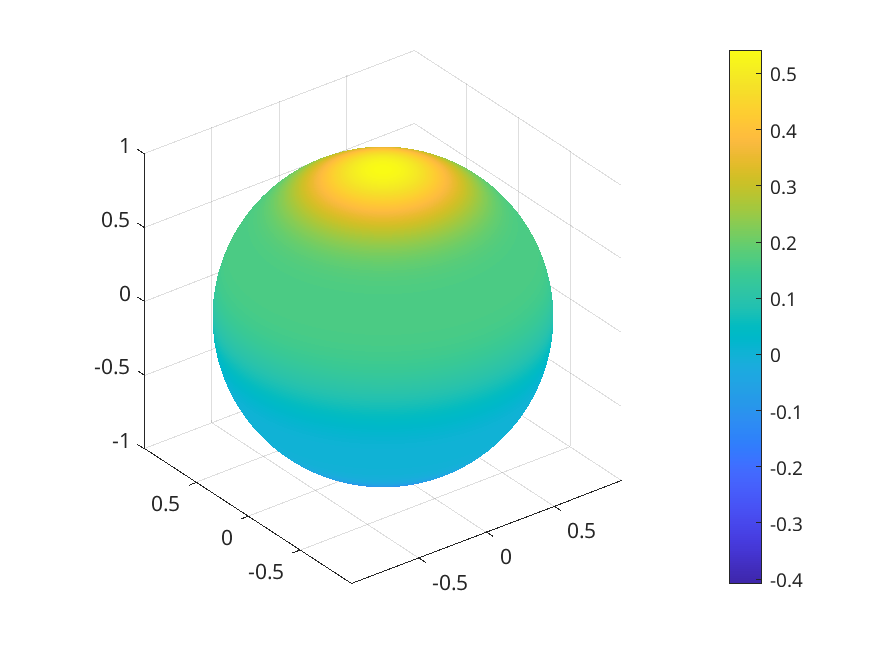}} \qquad
\subfloat[$O_{1,5}$, B-E entropy, {$[-10,-1]$}]{\label{fig:single_beam_M1_BE_O10}\includegraphics[width=.45\textwidth]{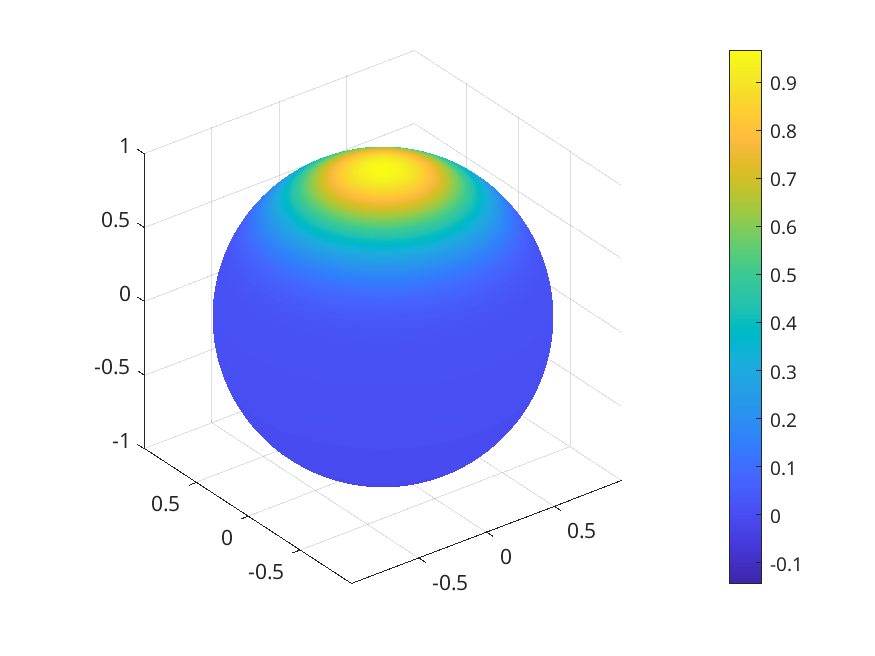}}
\caption{Approximation of a single beam using different models approximating the result of the maximum Bose-Einstein entropy.}
\label{fig:single_beam_M1_BE}
\end{figure}

All the results above are only for $N = 1$ and $K = 2$. To improve the results, we can increase either $N$ or $K$. Fig.~\ref{fig:single_beam_M1_K6} includes some results for $K = 6$. Increasing the value of $K$ from $2$ to $6$ does provide improved result for all other parameters, but the beam is still widely spread for all cases. A more efficient way to get improvement is to increase $N$ from $1$ to $3$. The results shown in Fig.~\ref{fig:single_beam_M3} exhibit much sharper beams compared with all previous results, and the functions are mostly positive except the Taylor model with $x_0 = 0$. In this test case, the optimized model shows the highest peak value for both types of entropy. Meanwhile, all these results show that the $\beta_{N,2K+1}$ model studied in \cite{PhiRTE1} is also a good choice for problems involving beams when the Boltzmann-Shannon entropy is considered. However, this model does not have a counterpart for the Bose-Einstein entropy.

\begin{figure}[h!]
\centering
\subfloat[$\beta_{1,13}$]{\includegraphics[width=.45\textwidth]{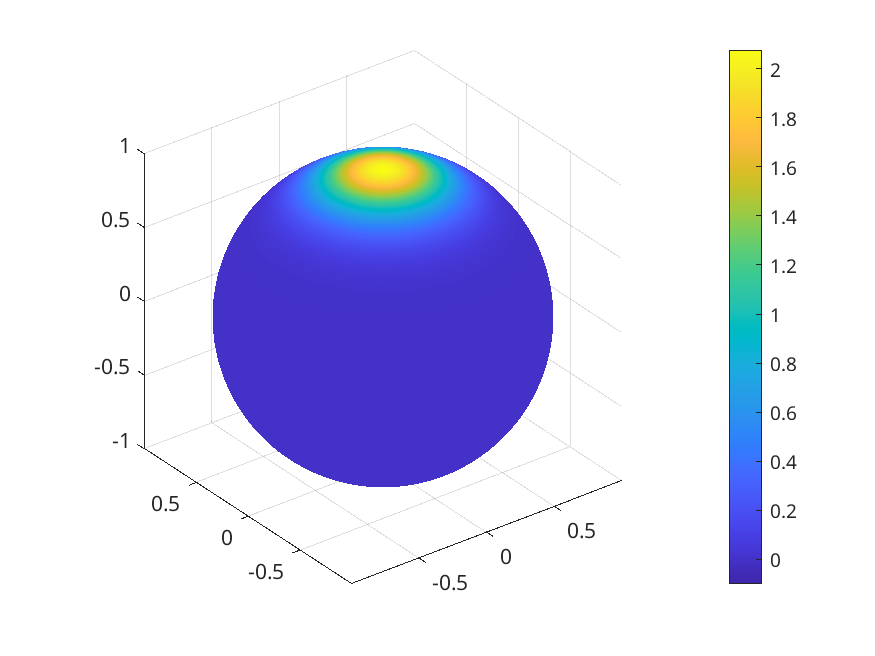}} \qquad
\subfloat[$T_{1,13}$, B-S entropy, $x_0= 0$]{\includegraphics[width=.45\textwidth]{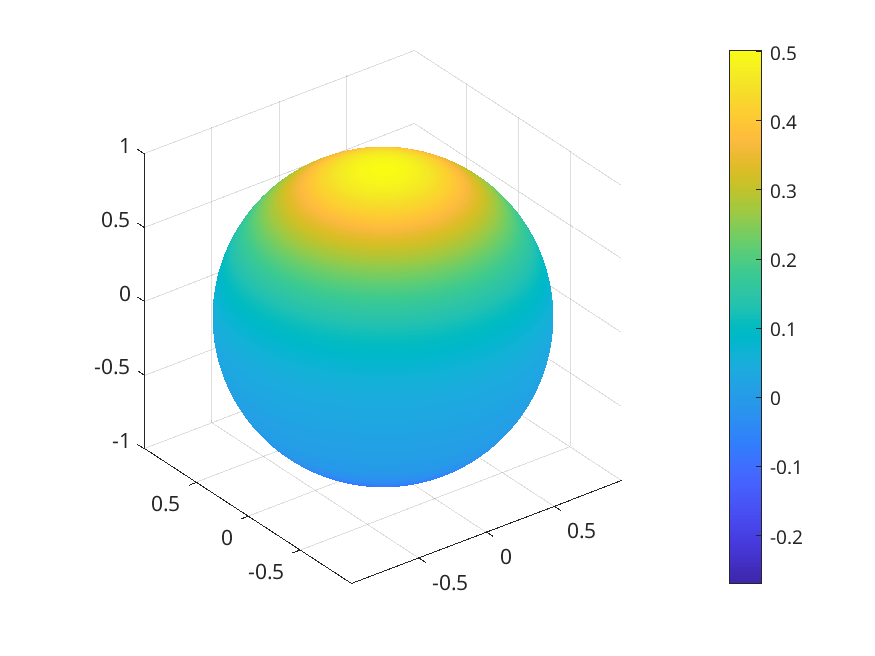}} \\
\subfloat[$T_{1,13}$, B-S entropy, $x_0 = -5$]{\includegraphics[width=.45\textwidth]{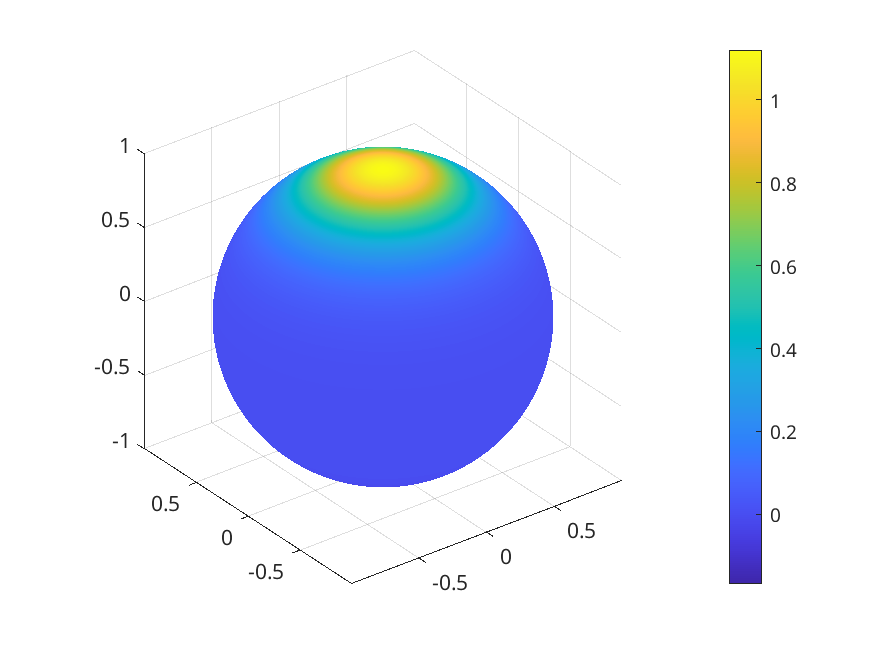}} \qquad
\subfloat[$O_{1,13}$, B-S entropy, {$[-10,0]$}]{\includegraphics[width=.45\textwidth]{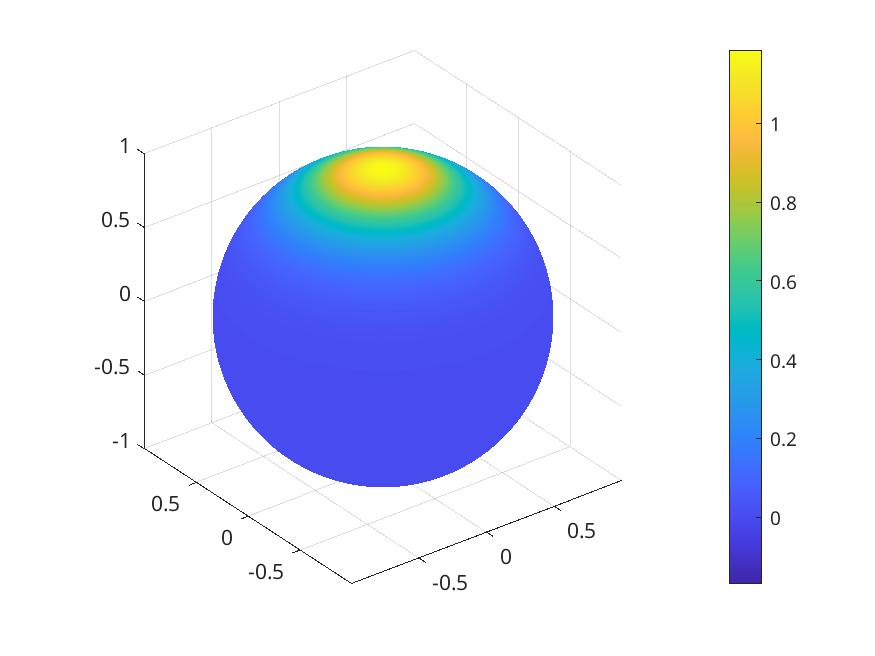}} \\
\subfloat[$T_{1,13}$, B-E entropy, $x_0 = -5.5$]{\includegraphics[width=.45\textwidth]{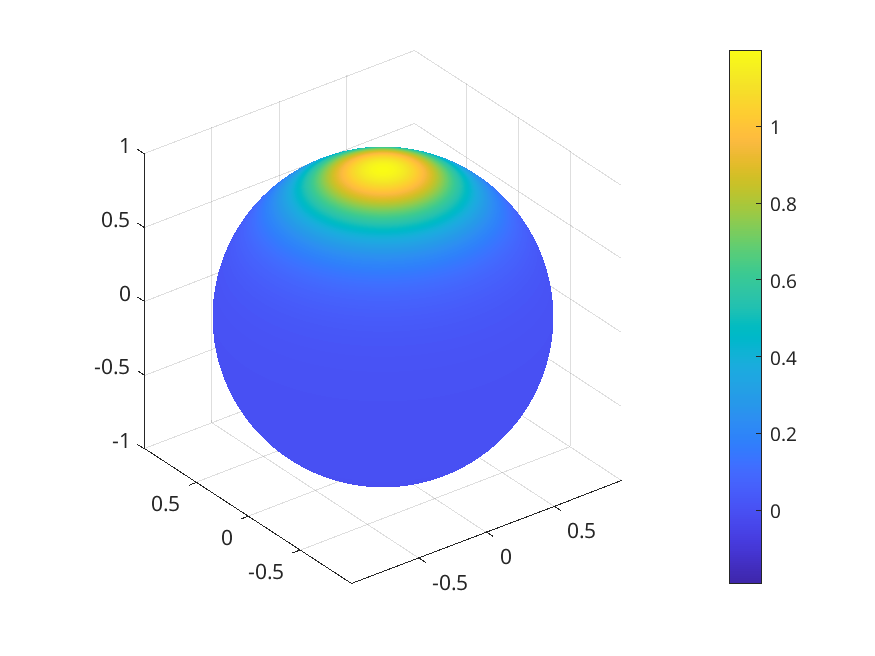}} \qquad
\subfloat[$O_{1,13}$, B-E entropy, {$[-10,-1]$}]{\includegraphics[width=.45\textwidth]{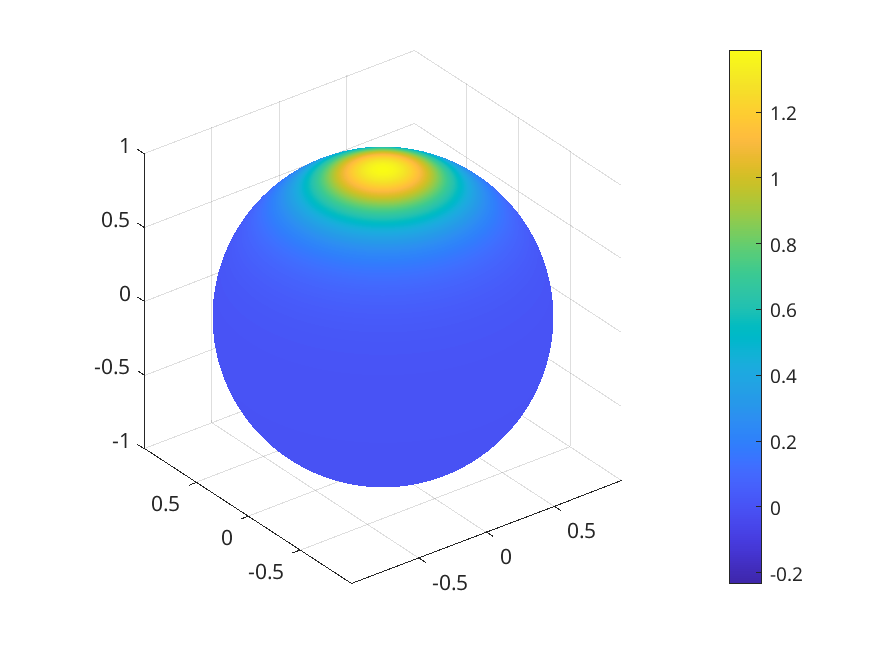}}
\caption{Approximation of a single beam using different models}
\label{fig:single_beam_M1_K6}
\end{figure}

\begin{figure}[h!]
\centering
\subfloat[$\beta_{3,5}$]{\includegraphics[width=.45\textwidth]{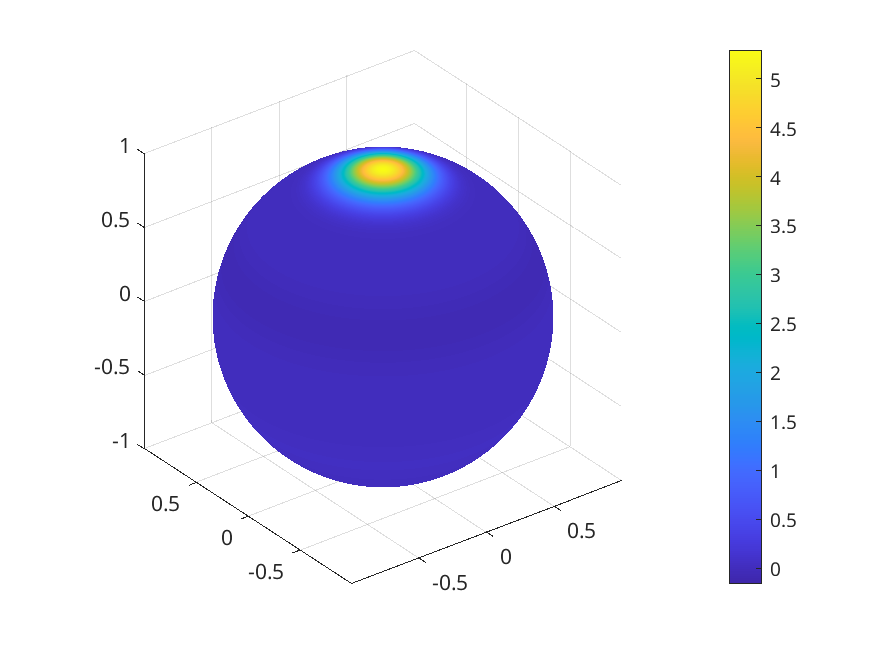}} \qquad
\subfloat[$T_{3,5}$, B-S entropy, $x_0= 0$]{\includegraphics[width=.45\textwidth]{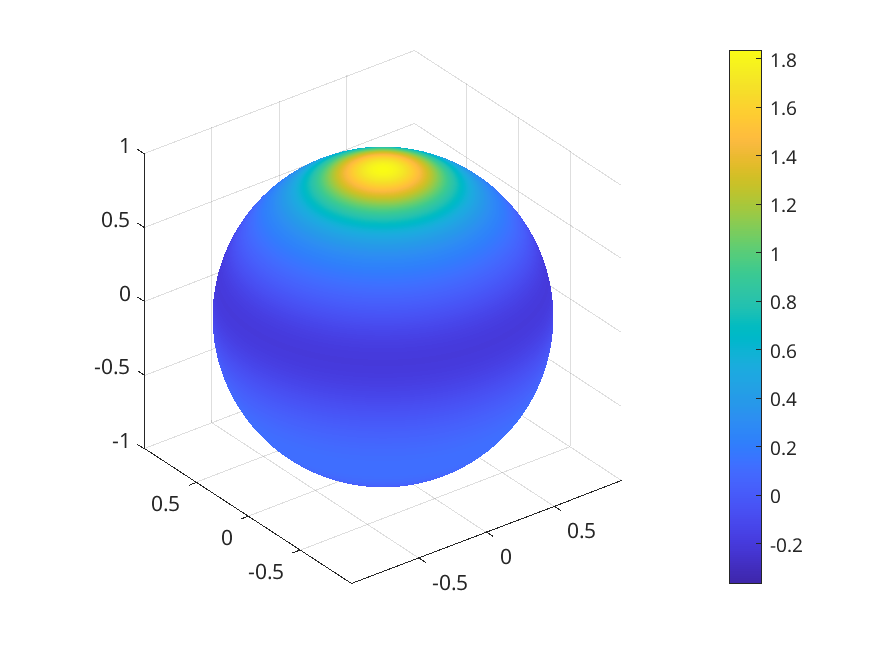}} \\
\subfloat[$T_{3,5}$, B-S entropy, $x_0 = -5$]{\includegraphics[width=.45\textwidth]{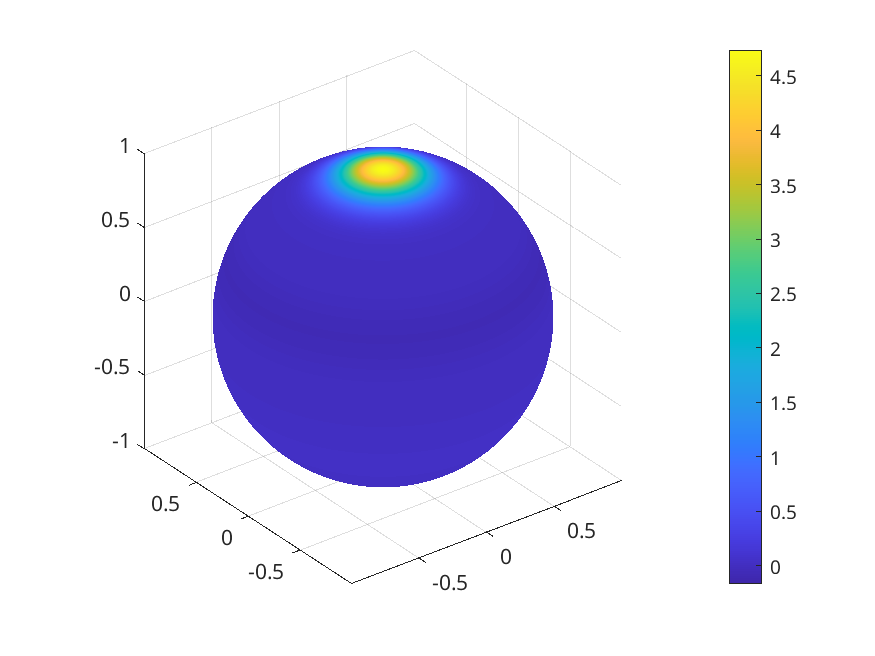}} \qquad
\subfloat[$O_{3,5}$, B-S entropy, {$[-10,0]$}]{\includegraphics[width=.45\textwidth]{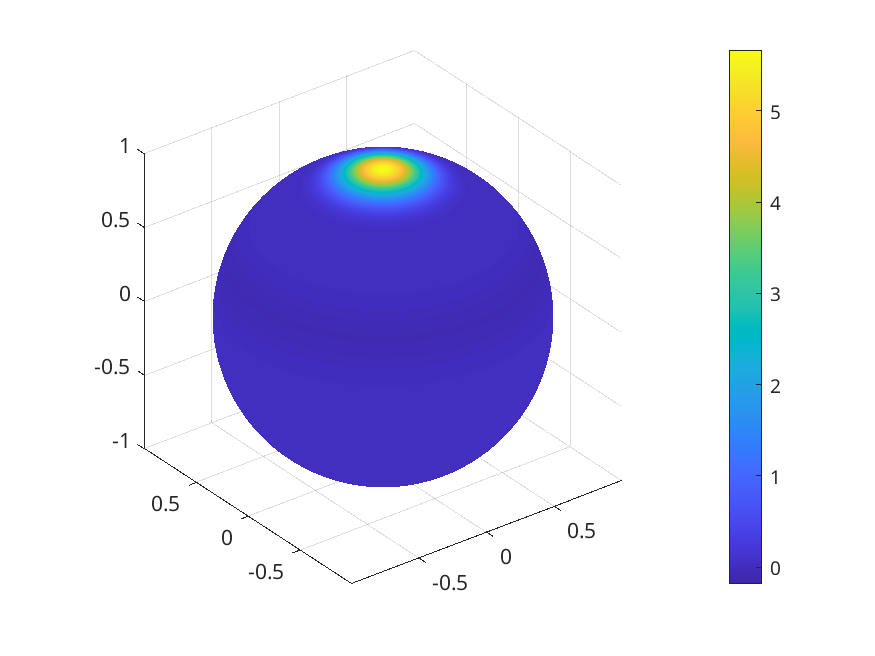}} \\
\subfloat[$T_{3,5}$, B-E entropy, $x_0 = -5.5$]{\includegraphics[width=.45\textwidth]{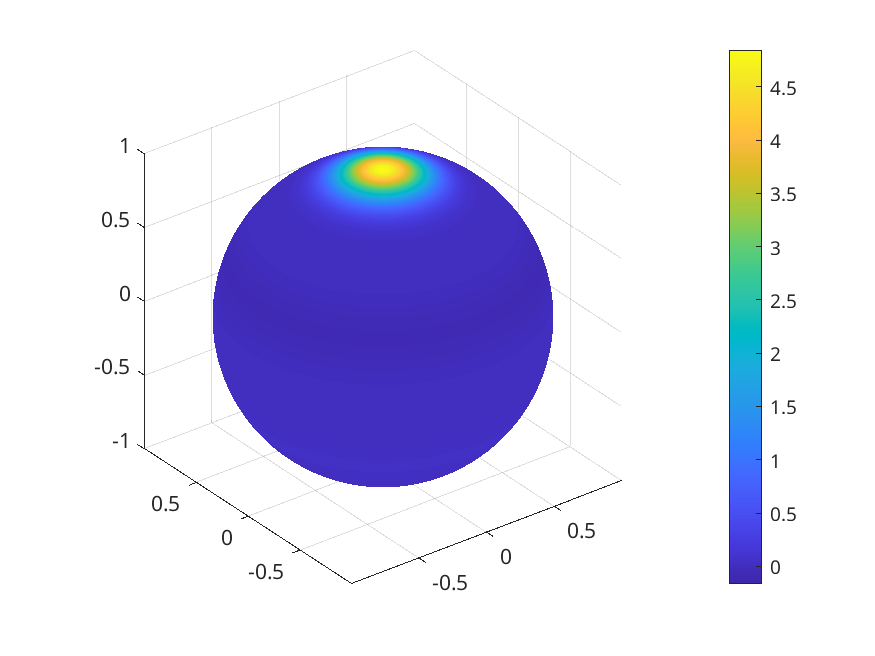}} \qquad
\subfloat[$O_{3,5}$, B-E entropy, {$[-10,-1]$}]{\includegraphics[width=.45\textwidth]{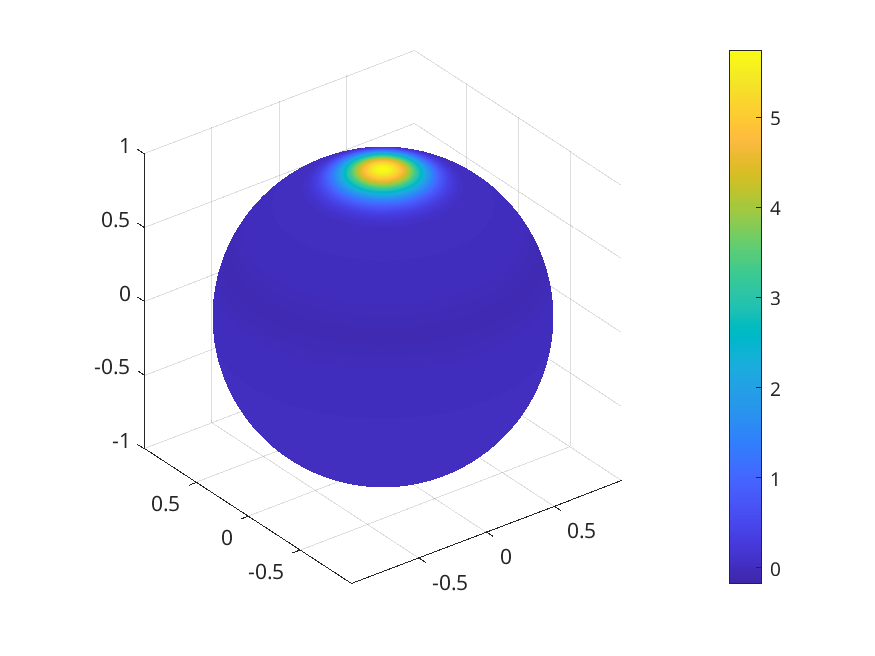}}
\caption{Approximation of a single beam using different models}
\label{fig:single_beam_M3}
\end{figure}

\subsection{Double beam approximation}
We now considers the approximation of the following function:
\begin{displaymath}
  I(\Omega) = \delta(\Omega - \Omega_1) + \delta(\Omega - \Omega_2),
\end{displaymath}
which often occurs when two beams cross each other. Here we focus only on the $T_{N,2K+1}$ and $O_{N,2K+1}$ models, and we refer the readers to \cite{PhiRTE1} for results of the $\beta_{N,2K+1}$ models.

Since there are two beams in the intensity function, a model with $N = 1$ cannot give a meaningful approximation. In our experiments, we choose $\Omega_1 = (0,0,1)^T$, $\Omega_2 = (1,0,0)^T$ and use $N = 3$ and $N = 9$ in the approximation. Other parameters are chosen to be the better combination in the previous subsection. In particular, the value of $K$ is fixed to be $2$ in all our examples.

For $N = 3$ (see Fig.~\ref{fig:double_beam_M3}), the two Taylor models give quite similar results. The two beams are correctly detected with correction locations, and they are both smeared out due to the smooth approximation. The two optimized results provide sharper beams, as the peak value of the distribution is higher. Negative values can still be spotted near the point $(0,-1,0)^T$, which can be improved by increasing $N$ or $K$. Here we only perform experiments with $N = 9$, which can be found in Fig.~\ref{fig:double_beam_M9}. The two bright spots are much more pointy than the results of $N = 3$, and the peak values are now significantly higher. Again, the $O_{N,2K+1}$ models perform slightly better than the $T_{N,2K+1}$ models for both types of entropy.

\begin{figure}[h!]
\centering
\subfloat[$T_{3,5}$, B-S entropy, $x_0 = -5$]{\includegraphics[width=.45\textwidth]{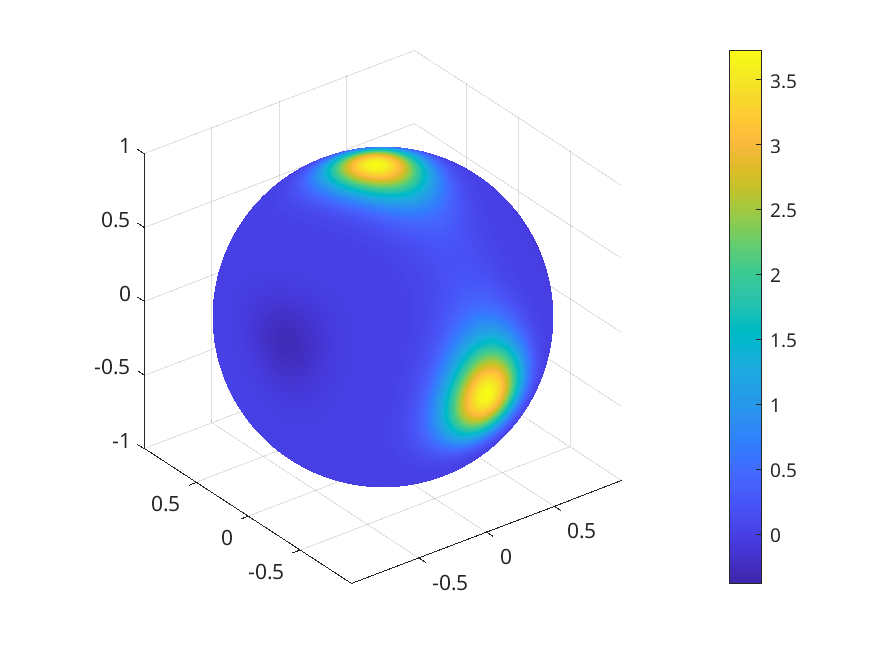}} \qquad
\subfloat[$O_{3,5}$, B-S entropy, {$[-10,0]$}]{\includegraphics[width=.45\textwidth]{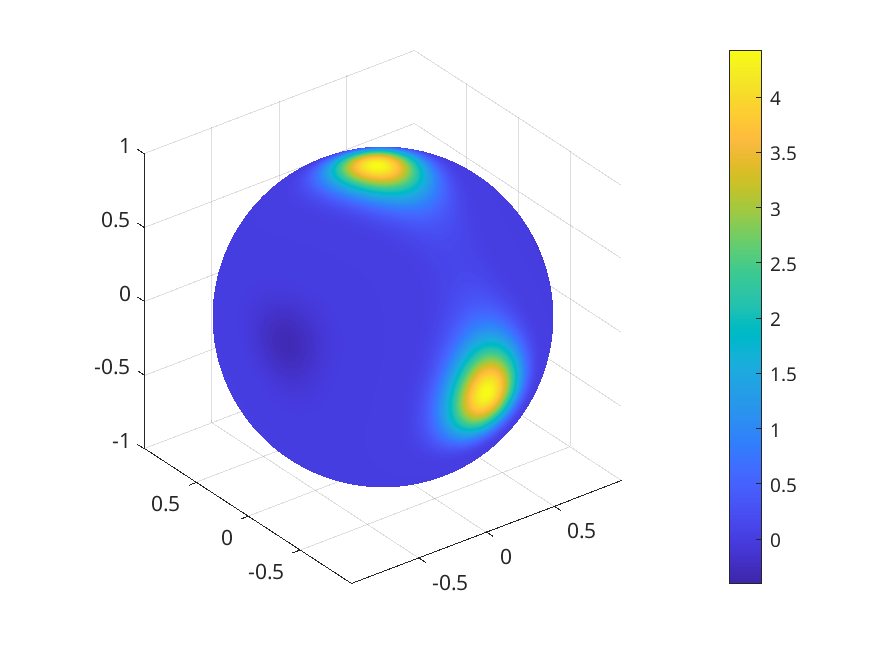}} \\
\subfloat[$T_{3,5}$, B-E entropy, $x_0 = -5.5$]{\includegraphics[width=.45\textwidth]{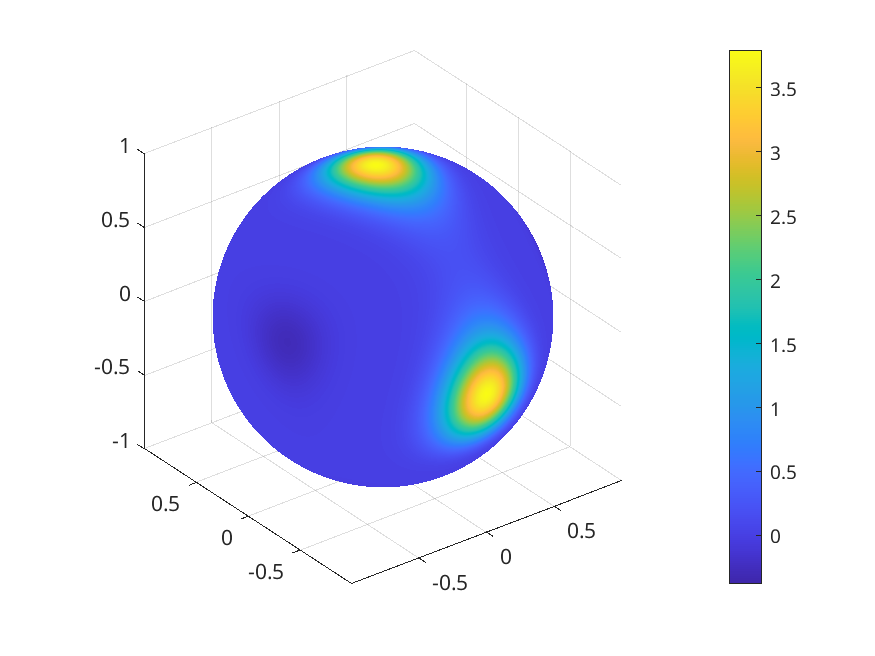}} \qquad
\subfloat[$O_{3,5}$, B-E entropy, {$[-10,-1]$}]{\includegraphics[width=.45\textwidth]{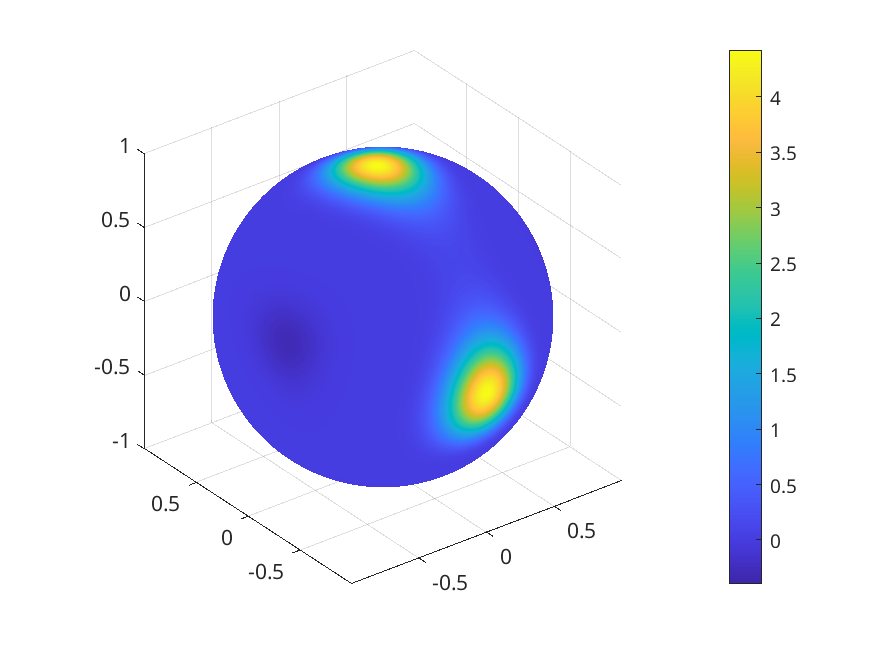}}
\caption{Approximation of a double-beam function using different models.}
\label{fig:double_beam_M3}
\end{figure}

\begin{figure}[h!]
\centering
\subfloat[$T_{9,5}$, B-S entropy, $x_0 = -5$]{\includegraphics[width=.45\textwidth]{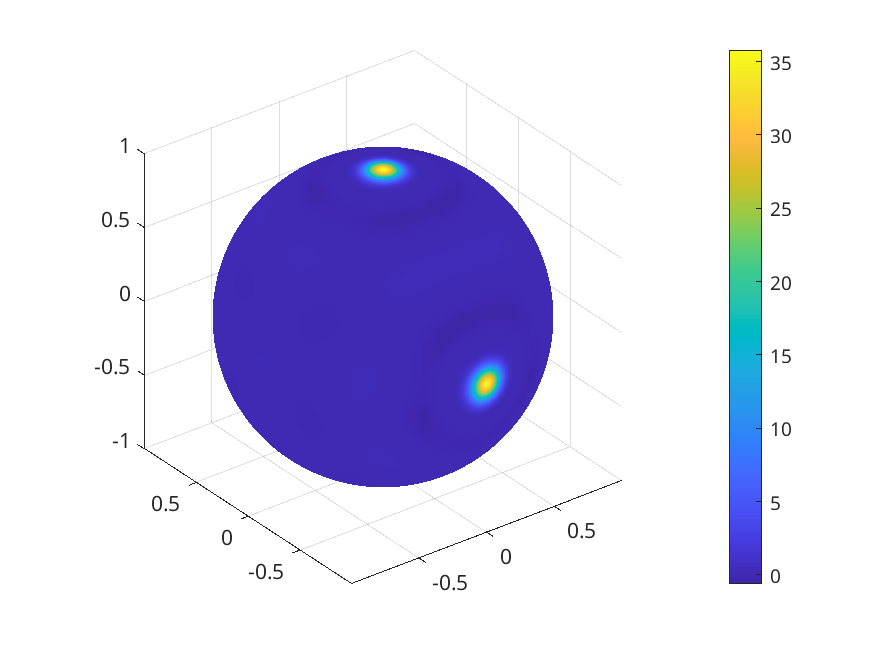}} \qquad
\subfloat[$O_{9,5}$, B-S entropy, {$[-10,0]$}]{\includegraphics[width=.45\textwidth]{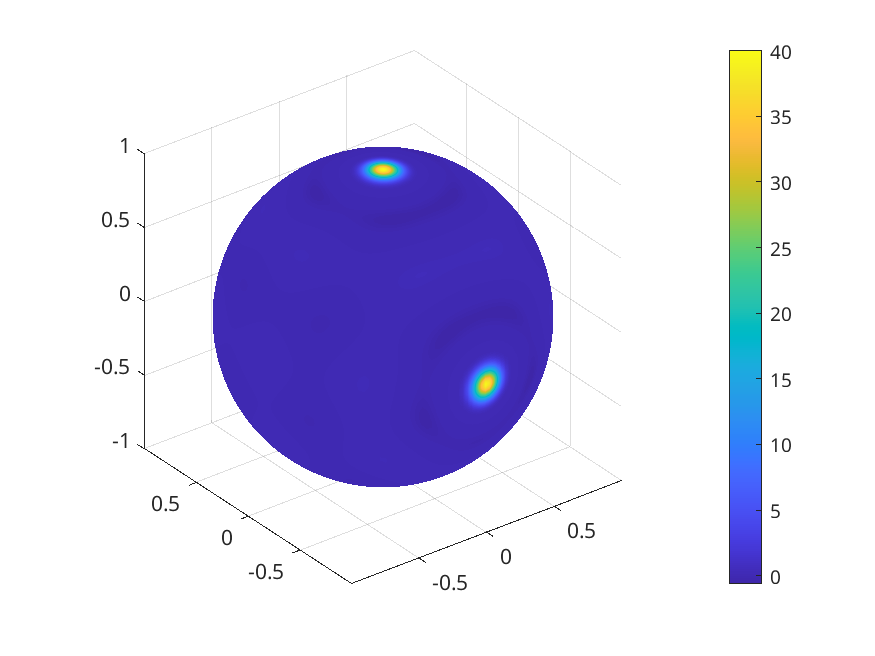}} \\
\subfloat[$T_{9,5}$, B-E entropy, $x_0 = -5.5$]{\includegraphics[width=.45\textwidth]{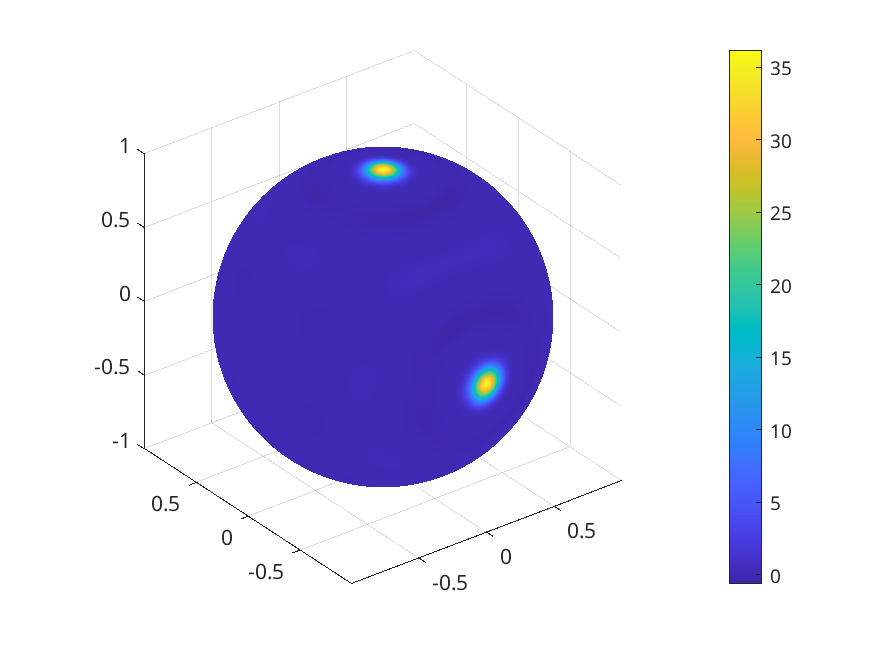}} \qquad
\subfloat[$O_{9,5}$, B-E entropy, {$[-10,-1]$}]{\includegraphics[width=.45\textwidth]{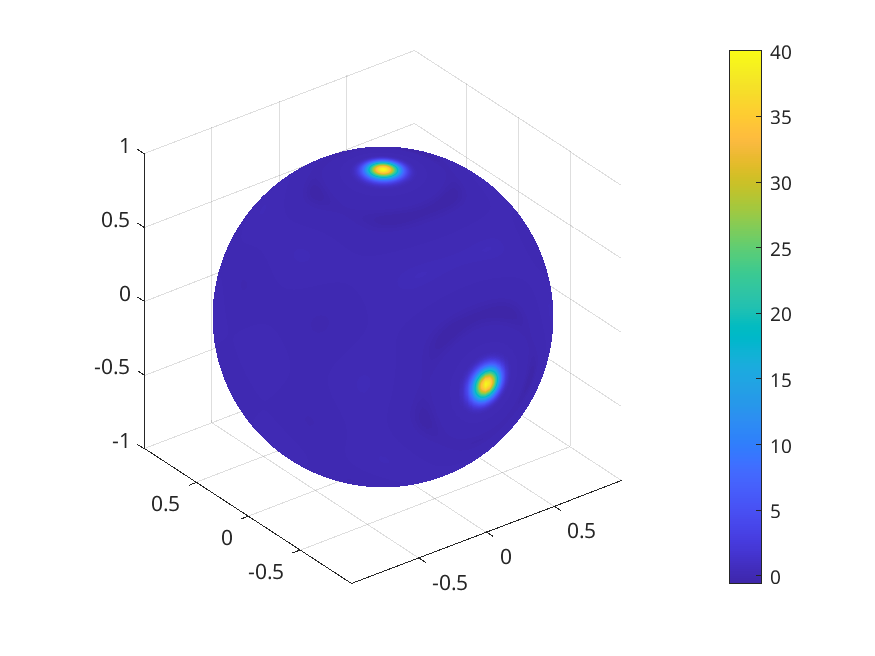}}
\caption{Approximation of a double-beam function using different models.}
\label{fig:double_beam_M9}
\end{figure}

\subsection{Approximating smooth functions}
We now consider the the approximation of smooth functions and hope to observe spectral convergence. We take the six-Gaussian function considered in \cite{PhiRTE1}:
\begin{displaymath}
  I(\Omega) = \sum_{k=1}^6 \exp\left(-5\|\Omega - \Omega_k\|^2\right),
\end{displaymath}
where
\begin{gather*}
  \Omega_1 = (1,0,0)^T, \qquad \Omega_2 = (-1,0,0)^T, \qquad \Omega_3 = (0,1,0)^T, \\
  \Omega_4 = (0,-1,0)^T, \qquad \Omega_5 = (0,0,1)^T, \qquad \Omega_6 = (0,0,-1)^T.
\end{gather*}
This function and its approximation using the $\beta_{5,5}$ model are plotted in Fig.~\ref{fig:six_Gaussian}. It can be seen that the $\beta_{5,5}$
approximation overestimates the peak value. Fig.~\ref{fig:six_Gaussian_BS} shows some approximations based on the Boltzmann-Shannon entropy. It can be seen by naked eyes that the $O_{5,5}$ model defined by optimization on $[-5,5]$ gives the best result.

\begin{figure}[h!]
\centering
\subfloat[Six-Gaussian function]{\includegraphics[width=.45\textwidth]{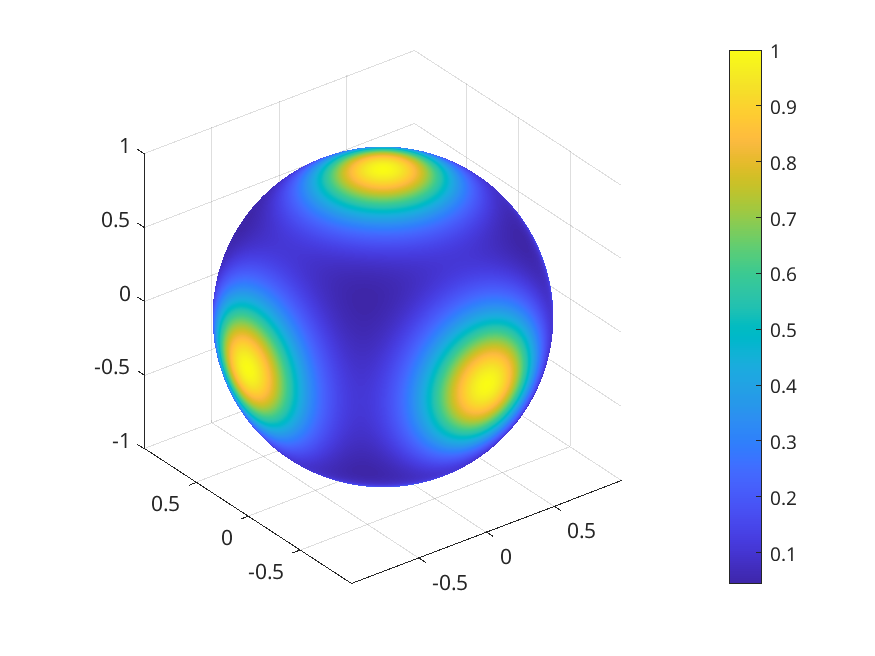}} \qquad
\subfloat[$\beta_{5,5}$ approximation]{\includegraphics[width=.45\textwidth]{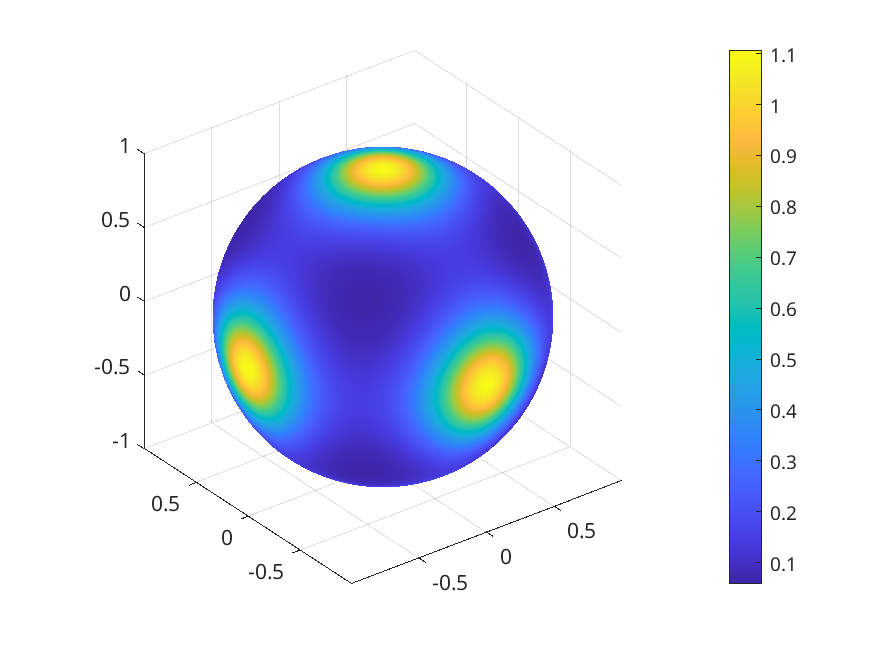}}
\caption{The six-Gaussian function and its $\beta_{5,5}$ approximation.}
\label{fig:six_Gaussian}
\end{figure}

\begin{figure}[h!]
\centering
\subfloat[$T_{5,5}$, B-S entropy, {$x_0 = 0$}]{\includegraphics[width=.45\textwidth]{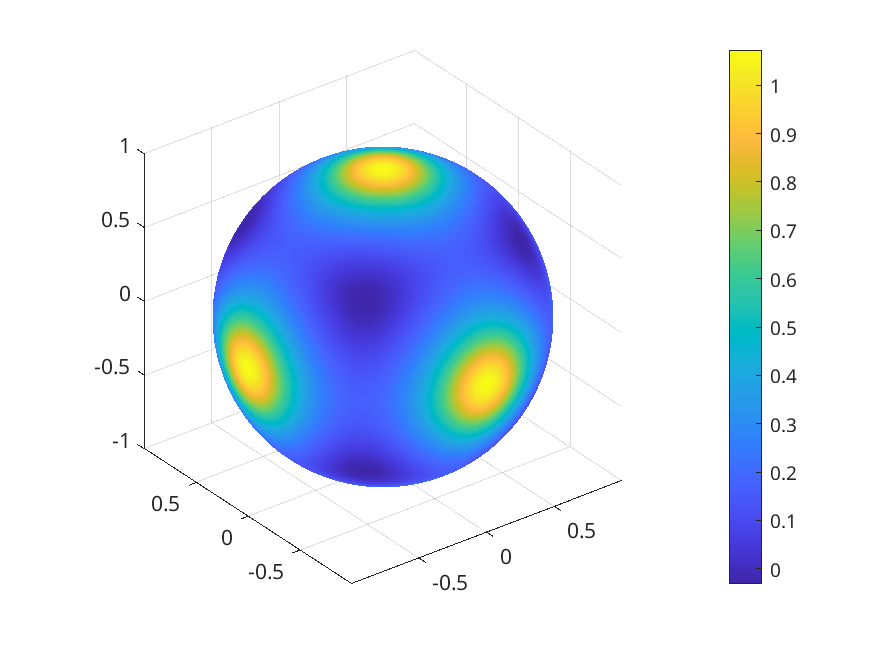}} \qquad
\subfloat[$O_{5,5}$, B-S entropy, {$[-5,5]$}]{\includegraphics[width=.45\textwidth]{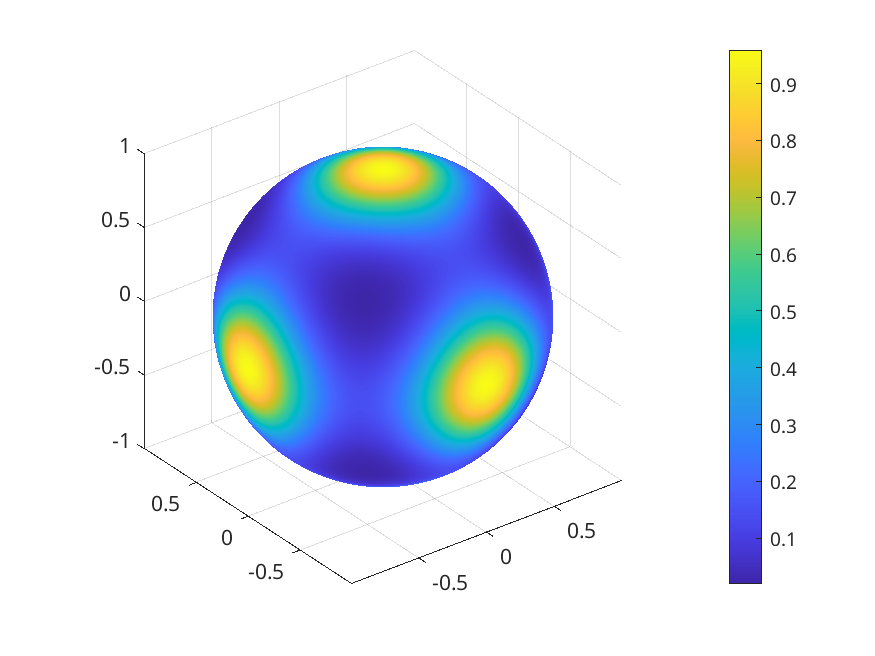}} \\
\subfloat[$T_{5,5}$, B-S entropy, $x_0 = -5$]{\includegraphics[width=.45\textwidth]{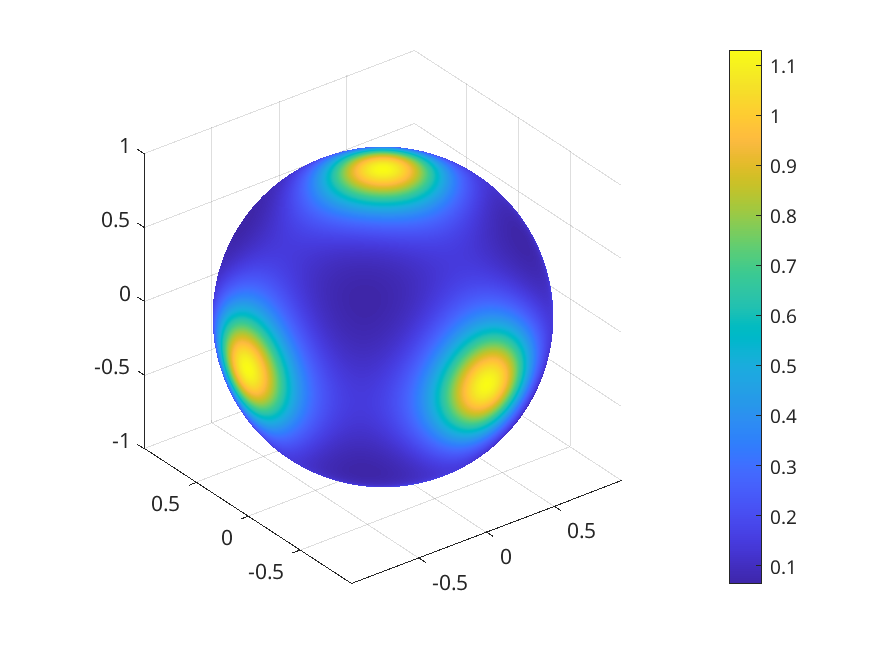}} \qquad
\subfloat[$O_{5,5}$, B-S entropy, {$[-10,0]$}]{\includegraphics[width=.45\textwidth]{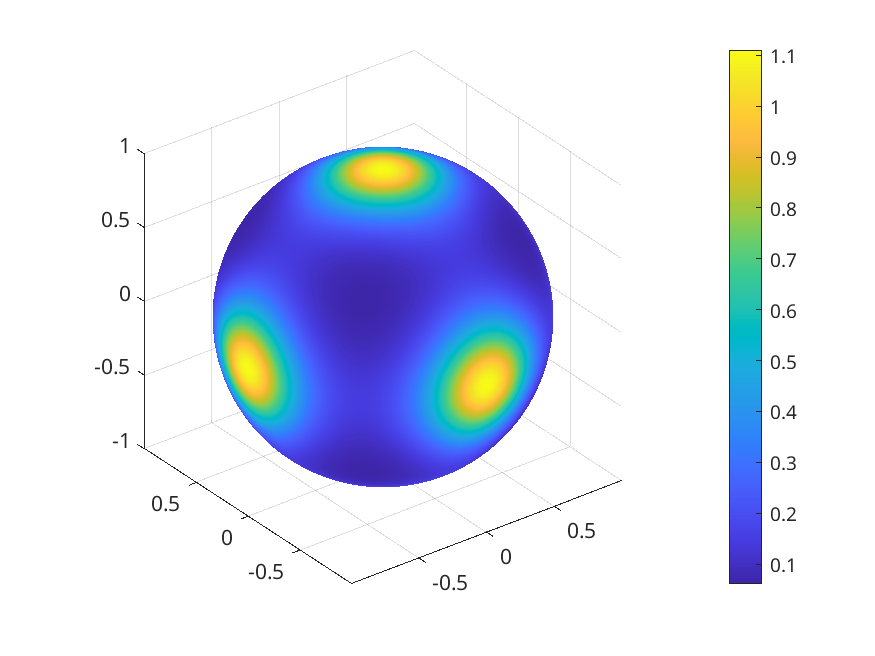}}
\caption{Approximations of the six-Gaussian function based on the Boltzmann-Shannon entropy.}
\label{fig:six_Gaussian_BS}
\end{figure}

Fig.~\ref{fig:six_Gaussian_BS_error} shows the error decay as $N$ increases. Although all methods provide spectral convergence, the choice of parameters does affect the convergence rate. In this example, the $O_{N,5}$ model optimized on $[-10,0]$ and the $T_{N,5}$ model with $x_0 = -5$ have similar performance, which is worse than the $O_{N,5}$ model optimized on the interval $[-5,5]$ but better the $T_{N,5}$ model with $x_0 = 0$. Note that the convergence rate of these models is not determined by the quality of approximation to the $\beta$ function. This is also observed in \cite{PhiRTE1}, where the $P_N$ (which corresponds to $\beta_{N,1}$) model has the best convergence rate among all $\beta_{N,K}$ models, although the $\beta_{N,1}$ model only approximates the exponential function by $\mathrm{e}^x \approx 1 + x$, which is a poor fit. Here we conjecture that the $O_{N,5}$ model optimized on $[-5,5]$ has a better convergence rate because the function $O_5(x)$ is relatively closer to a linear function with slope $1$ for a certain range on the negative part of the domain (see Fig.~\ref{fig:TO_BS}). We focus on the negative part because the value of $I(\Omega)$ is mostly between $0$ and $1$. This means the $O_{N,5}$ model optimized on $[-5,5]$ is likely to be closer to the $P_N$ model when approximating this function.

\begin{figure}[!ht]
\centering
\subfloat[Error decay]{\includegraphics[height=.38\textwidth]{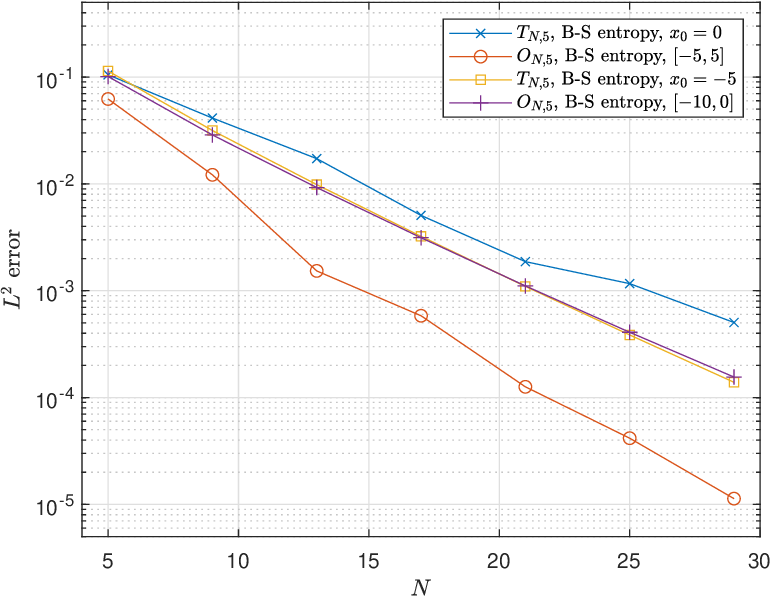}} \quad
\subfloat[Approximation of the exponential]{\label{fig:TO_BS}\includegraphics[height=.38\textwidth]{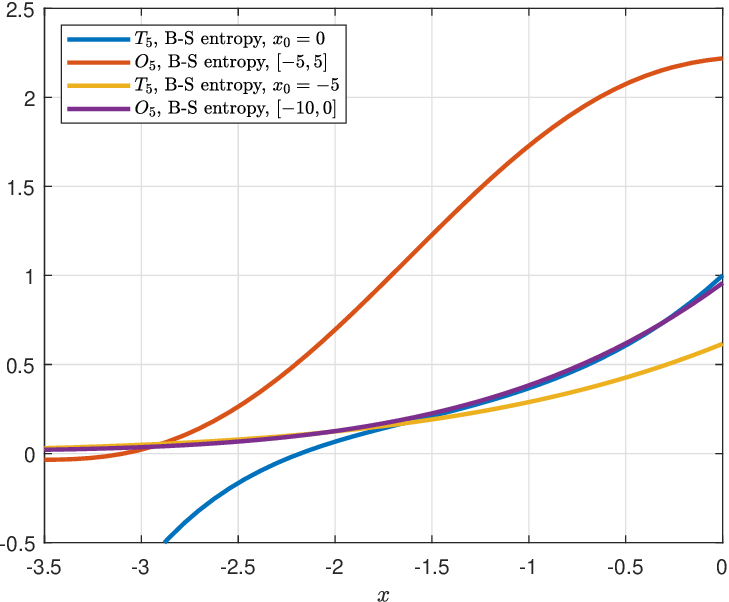}}
\caption{Error decay for approximations of the six-Gaussian function based on the Boltzmann-Shannon entropy and the corresponding approximations of the exponential function}
\label{fig:six_Gaussian_BS_error}
\end{figure}

Now we test the performance of the methods based on the Bose-Einstein entropy. The results are shown in Fig.~\ref{fig:six_Gaussian_BE_error}. It can be seen that the spectral convergence is observed for three models except the $O_{N,5}$ method optimized on the interval $[-5,-0.2]$. This is likely due to the flatness of the $O_5$ function on the interval from $[-2.5, -1.5]$ caused by enforcing good approximations in the region closer to zero where the Planckian has larger values. This can be improved by slightly shifting the upper bound of the domain. Fig.~ \ref{fig:error_decay} gives the result for $O_{N,5}$ method optimized on $[-5, -0.5]$, where a much better convergence rate is obtained. Nevertheless, since the choice of the domain is not optimized, the numerical error is still significantly larger than other lines in Figure \ref{fig:error_decay}.
\begin{figure}[!ht]
\centering
\subfloat[Error decay]{\label{fig:error_decay}\includegraphics[height=.38\textwidth]{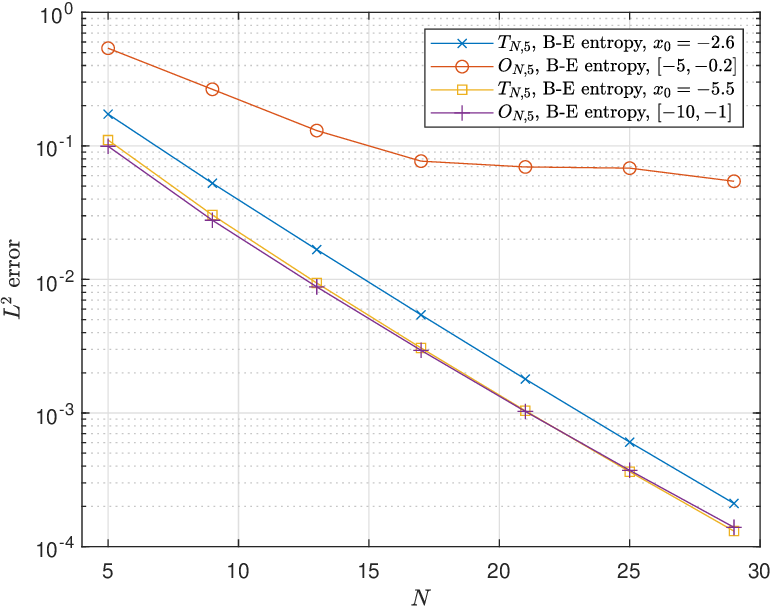}} \quad
\subfloat[Approximation of the exponential]{\label{fig:TO_BE}\includegraphics[height=.38\textwidth]{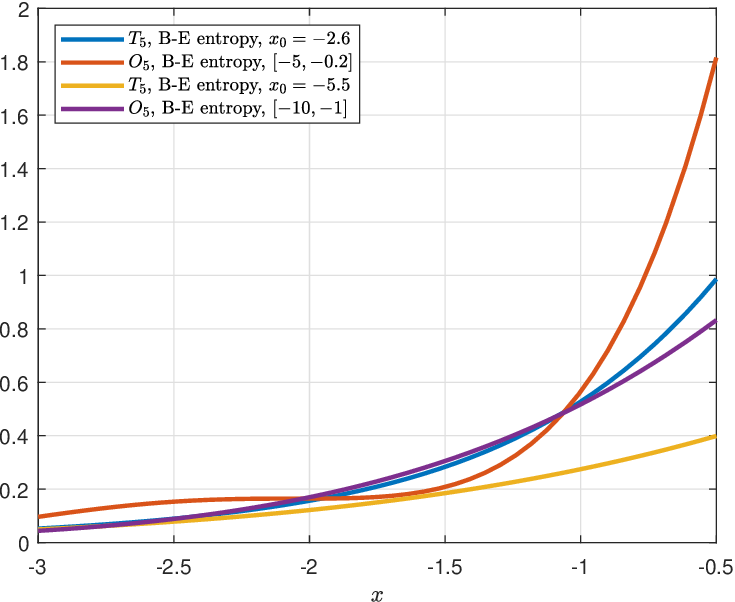}}
\caption{Error decay for approximations of the six-Gaussian function based on the Bose-Einstein entropy and the corresponding approximations of the exponential function}
\label{fig:six_Gaussian_BE_error}
\end{figure}

\begin{figure}[!ht]
\centering
\includegraphics[height=.38\textwidth]{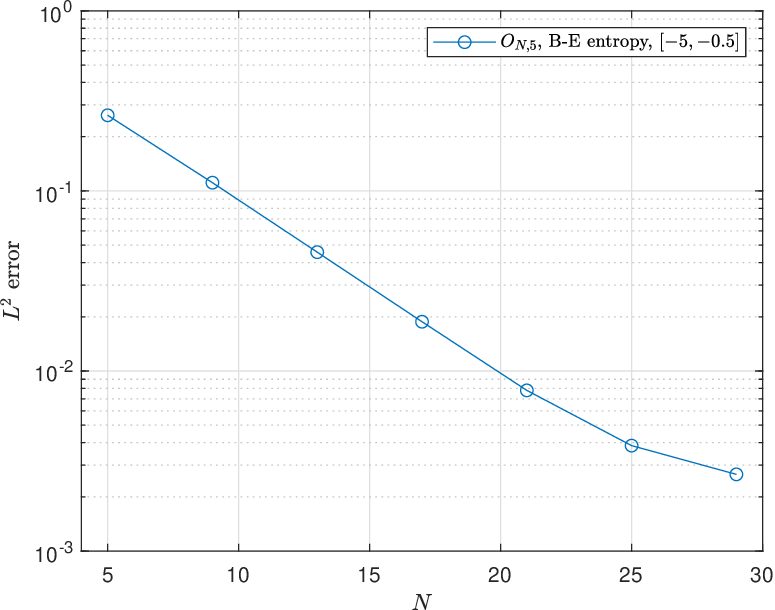}
\caption{Error decay for approximations of the six-Gaussian function based on the Bose-Einstein entropy}
\label{fig:six_Gaussian_BE_5_error}
\end{figure}

\section{Conclusion}
\label{sec:concl}
We have shown that the moment approximation developed in~\cite{PhiRTE1} is flexible enough to be adapted to dynamical models dissipating various types of entropies. Especially, this construction is shown to be suitable to construct a closure dissipating an approximation of a chosen physical entropy such as Boltzmann-Shannon's or Bose-Einstein's entropy. In this work, the closure is constructed using polynomials so that the integration can be carried out exactly in the moment inversion problem. Such an extension introduces many parameters to the approximate models. It is demonstrated by numerical tests that the quality of the moment closure depends on the choice of these parameters, but in most cases, decent results can be obtained by optimizing the distance between the polynomial and the physical entropy. In our future work, we are going to further study their performance by applying these moment models to the radiative transfer equation with interaction with matter.

\section*{Acknowledgements}
Zhenning Cai was supported by the Academic Research Fund of the Ministry of Education of Singapore under grant No. A-0004592-00-00.

\bibliographystyle{plain}
\bibliography{PhiRTE2}

\begin{thebibliography}{10}

\bibitem{abdelmalik_thesis}
M.~Abdelmalik.
\newblock {\em Adaptive algorithms for optimal multiscale model hierarchies of
  the {Boltzmann} Equation: {Galerkin} methods for kinetic theory}.
\newblock PhD thesis, TU Eindhoven, 2017.

\bibitem{PhiRTE1}
M.~Abdelmalik, Z.~Cai, and T.~Pichard.
\newblock Moment methods for the radiative transfer equation based on
  $\varphi$-divergences.
\newblock {\em https://arxiv.org/abs/2304.01758 under review}, pages 1--29,
  2023.

\bibitem{abdelmalik}
M.~Abdelmalik and H.~van Brummelen.
\newblock Moment closure approximations of the boltzmann equation based on
  $\varphi$-divergences.
\newblock {\em J. Stat. Phys.}, 164:77--104, 2016.

\bibitem{spectral_meth2}
C.~Canuto, M.~Y. Hussaini, A.~Quarteroni, and T.~A. Zang.
\newblock {\em Spectral Methods: Fundamentals in Single Domains}.
\newblock Springer-Verlag, 2006.

\bibitem{Carlson_book}
B.~Carlson.
\newblock {\em Methods in computational physics}.
\newblock ACADEMIC PRESS, 1963.

\bibitem{Chandrasekhar_book}
S.~Chandrasekhar.
\newblock {\em Radiative transfer}.
\newblock Oxford, 1950.

\bibitem{czizar}
I.~Csiszár.
\newblock A class of measures of informativity of observation channels.
\newblock {\em Period. Math. Hung.}, 2:191--213, 1972.

\bibitem{Dautray-Lions}
R.~Dautray and J.-L. Lions.
\newblock {\em Mathematical Analysis and Numerical Methods for Science and
  Technology: Volume 6, Evolution Problems {II}}.
\newblock Springer, 2000.

\bibitem{Frank_SPN}
M.~Frank, A.~Klar, E.~Larsen, and S.~Yasuda.
\newblock Time-dependent simplified pn approximation to the equations of
  radiative transfer.
\newblock {\em J. Comput. Phys.}, 226(2):2289--2305, 2007.

\bibitem{Godlewski_Raviart_book}
E.~Godlewski and P.-A. Raviart.
\newblock {\em Numerical approximation of hyperbolic systems of conservation
  laws}.
\newblock Springer, 1996.

\bibitem{Hauck}
C.~D. Hauck.
\newblock High-order entropy-based closures for linear transport in slab
  geometry.
\newblock {\em Commun. Math. Sci}, 9(1):187--205, 2011.

\bibitem{spectral_meth1}
J.~S. Hesthaven, S.~Gottlieb, and D.~Gottlieb.
\newblock {\em Spectral Methods for Time-Dependent Problems}.
\newblock Cambridge, 2009.

\bibitem{Humbird_diffusion}
K.~D. Humbird and R.~McClarren.
\newblock Adjoint-based sensitivity analysis for high-energy density radiative
  transfer using flux-limited diffusion.
\newblock {\em High Energy Density Physics}, 22:12--16, 2017.

\bibitem{Kawashima-Yong}
S.~Kawashima and W.-A. Yong.
\newblock Dissipative structure and entropy for hyperbolic systems of balance
  laws.
\newblock {\em Arch. Rational Mech. Anal.}, 174:345--364, 2004.

\bibitem{Lasserre_book}
J.-B. Lasserre.
\newblock {\em Moment, positive polynomials, and their applications}.
\newblock Imperial college press, 2009.

\bibitem{Lebedev1976quadratures}
V.I. Lebedev.
\newblock Quadratures on a sphere.
\newblock {\em USSR Comput. Math. Math. Phys.}, 16(2):10--24, 1976.

\bibitem{Lebedev1999quadrature}
V.I. Lebedev and D.N. Laikov.
\newblock A quadrature formula for the sphere of the 131st algebraic order of
  accuracy.
\newblock {\em Doklady Mathematics}, 59(3):477--481, 1999.

\bibitem{levermore}
C.~D. Levermore.
\newblock Moment closure hierarchies for kinetic theories.
\newblock {\em J. Stat. Phys.}, 83(5--6):1021--1065, 1996.

\bibitem{Lewis_Miller_book}
E.~E. Lewis and W.~F. Miller.
\newblock {\em Computational Methods of Neutron Transport}.
\newblock John Wiley \& Sons Inc, 1984.

\bibitem{Li_B2}
R.~Li and W.~Li.
\newblock {3D $B_2$} model for radiative transfer equation.
\newblock {\em Int. J. Numer. Anal. Modeling}, 17(1):118--150, 2020.

\bibitem{lowrie-morel}
R.~B. Lowrie, J.~E. Morel, and J.~A. Hittinger.
\newblock The coupling of radiation and hydrodynamics.
\newblock {\em The Astro. J.}, 521:432--450, 1999.

\bibitem{McClaren_SPN}
R.~McClarren.
\newblock Theoretical aspects of the simplified pn equations.
\newblock {\em Transport Theory Stat. Phys.}, 39(2-4):73--109, 2010.

\bibitem{mihalas_book}
D.~Mihalas and B.~R.~W. Mihalas.
\newblock {\em Foundations of Radiation Hydrodynamics}.
\newblock Oxford, 1983.

\bibitem{Minerbo}
G.~N. Minerbo.
\newblock Maximum entropy {Eddington} factors.
\newblock {\em J. Quant. Spectros. Radiat. Transfer}, 20:541--545, 1978.

\bibitem{Olson_diffusion}
G.~L. Olson, L.~H. Auer, and M.~L. Hall.
\newblock Diffusion, p1, and other approximate forms of radiation transport.
\newblock {\em J. Quant. Spec. Rad. Trans.}, 64(6):619--634, 2000.

\bibitem{PiN}
T.~Pichard.
\newblock A moment closure based on a projection on the boundary of the
  realizability domain: 1d case.
\newblock {\em Kin. rel. models}, 13:1243--1280, 2020.

\bibitem{pichard_m2}
T.~Pichard, G.~W. Alldredge, S.~Brull, B.~Dubroca, and M.~Frank.
\newblock An approximation of the {$M_2$} closure: application to radiotherapy
  dose simulation.
\newblock {\em J. Sci. Comput.}, 71:71--108, 2017.

\bibitem{pomraning_book}
G.~C. Pomraning.
\newblock {\em Equations of Radiation Hydrodynamics}.
\newblock Pergamon, 1973.

\bibitem{groth_m2}
J.~A.~R. Sarr and C.~P.~T. Groth.
\newblock A second-order maximum-entropy inspired interpolative closure for
  radiative heat transfer in gray participating media.
\newblock {\em J. Quant. Spectros. Radiat. Transfer}, page 107238, 2020.

\bibitem{Schmuedgen_book}
K.~Schmuedgen.
\newblock {\em The moment problem}.
\newblock Springer, 2017.

\bibitem{Schneider_KN}
F.~Schneider.
\newblock Kershaw closures for linear transport equations in slab geometry i:
  model derivation.
\newblock {\em J. Comput. Phys.}, 322:905--919, 2016.

\bibitem{Szego_book}
G.~Szeg\"o.
\newblock {\em Orthogonal polynomials}.
\newblock AMS, 1939.

\end{thebibliography}

\end{document}